\newcommand{\RR}{\mathbb R} 
\newcommand{\R}{\mathbb R}
\newcommand{\rd}{\mathrm{d}}
\newcommand{\beqn}{\begin{equation}}
\newcommand{\eeqn}{\end{equation}}
\newcommand{\bean}{\begin{eqnarray}}
\newcommand{\eean}{\end{eqnarray}}
\DeclareMathAlphabet{\mathpzc}{OT1}{pzc}{m}{it}
\newtheorem{theorem}{Theorem}[section]
\newtheorem{lemma}[theorem]{Lemma}
\newtheorem{proposition}[theorem]{Proposition}
\newtheorem{definition}[theorem]{Definition}
\newtheorem{remark}[theorem]{Remark}
\newtheorem{remarks}[theorem]{Remarks}
\numberwithin{equation}{section}
\begin{document}
\title{Bounded traveling waves for a thin film with gravity\\ and insoluble surfactant} 
\thanks{This work was partially supported by the french-german PROCOPE project 20190SE}
\author{Joachim Escher}
\address{Leibniz Universit\"at Hannover, Institut f\"ur Angewandte Mathematik, Welfengarten 1, D--30167 Hannover, Germany}
\email{escher@ifam.uni-hannover.de}
\author{Matthieu Hillairet}
\address{CEREMADE, UMR CNRS~7534, Universit\'e de Paris-Dauphine \\ Place du Mar\'echal De Lattre De Tassigny,
F--75775 Paris Cedex 16, France}
\email{hillairet@ceremade.dauphine.fr}
\author{Philippe Lauren\c cot}
\address{Institut de Math\'ematiques de Toulouse, CNRS UMR~5219, Universit\'e de Toulouse, F--31062 Toulouse cedex 9, France} 
\email{laurenco@math.univ-toulouse.fr}
\author{Christoph Walker}
\address{Leibniz Universit\"at Hannover, Institut f\"ur Angewandte Mathematik, Welfengarten 1, D--30167 Hannover, Germany}
\email{walker@ifam.uni-hannover.de}

\keywords{Thin films, surfactant, traveling wave}
\subjclass[2010]{35C07, 35K40, 35K65, 35Q35, 76A20} 

\date{\today}

\begin{abstract}
Lubrication equations for a surfactant-driven flow of a thin layer of fluid are considered with a singular surfactant-dependent surface tension. It is shown that there exists a bounded traveling wave solution which connects a fully surfactant-coated film to an uncoated film. The regularity of the traveling wave depends on whether or not surface diffusion of the insoluble surfactant is included.
\end{abstract}

\maketitle

%
%
\pagestyle{myheadings}
\markboth{\sc{Escher, Hillairet, Lauren\c{c}ot $\&$ Walker}}{\sc{Traveling waves in thin films with insoluble surfactant}}

\section{Introduction and Main Results}

The study of the dynamical behavior of viscous thin films coating surfaces is a classical topic in fluid dynamics. A widely used approach in this field is to derive simplified model equations from lubrication theory rather than studying the full equations of fluid mechanics based on first principles. The fact that surface tension effects become significant, or even dominant, is a common structural feature of many of these models. Surface tension itself is very sensitive to surface active agents, so-called surfactants, on the free surface of the thin film. Surfactant-driven flows of thin liquid films actually have attracted considerable interest in recent years.

In this paper, we study a model which describes an infinitely extended one-dimensional monolayer thin film carrying an insoluble surfactant. This model is essentially due to Jensen and Grotberg, see \cite{JenGro92, JenGro93}.  It involves besides surface diffusion of the surfactant also gravitational effects but neglects fourth-order capillarity terms. Writing $h(t,x)$ and $\gamma(t,x)$ for the film height and the surfactant concentration, respectively, at the time $t>0$ and the position $x\in\mathbb R$, the system reads as:
\begin{eqnarray}
\partial_t h + \partial_x \left( - \frac{G h^3}{3} \partial_x h + \frac{h^2}{2} \sigma'(\gamma) \partial_x\gamma \right) & = & 0 \ , \qquad  (t,x)\in (0,\infty) \times \mathbb R\ , \label{i1} \\
\partial_t\gamma + \partial_x \left( - \frac{G h^2}{2} \gamma \partial_x h + \left[ h \gamma \sigma'(\gamma) - D \right] \partial_x\gamma \right) & = & 0 \ , \qquad (t,x)\in (0,\infty) \times \mathbb R\ , \label{i2}
\end{eqnarray}
after a suitable scaling so that the non-negative parameters $D$ and $G$ representing a surface diffusion coefficient and a gravitational force, respectively, are non-dimensional. The function $\sigma$ denotes the surfactant-dependent surface tension. A common choice of $\sigma$ used in the literature is $\sigma_\infty(\gamma) := \sigma_0(1 - \gamma$) with $\sigma_0>0$, which is the limit as $\beta\to\infty$ of the Sheludko equation
\begin{equation}
\sigma_\beta(\gamma) := \sigma_0 \beta \left[ \left( \frac{1 + \theta(\beta)}{1 + \theta(\beta) \gamma} \right)^{3} - 1 \right]\ , \quad \theta(\beta) := \left( \frac{\beta+1}{\beta} \right)^{1/3} - 1\ , \quad \beta>0\ , \label{JG}
\end{equation}
see \cite{CMP09, JenGro92}. Other choices of surface tensions in the literature include the Szyszkowski equation $\sigma_{Sz}$ and the Frumkin equation $\sigma_{Fr}$
\begin{equation}
\sigma_{Sz}(\gamma) := \sigma_0 + a \ln{(1-\gamma)}\ , \quad \sigma_{Fr}(\gamma) := \sigma_0 + a \ln{(1-\gamma)} + b \gamma^2\ , \label{SzFr}
\end{equation}
for positive constants $\sigma_0, a, b$ \cite{CF95, CMP09}. A noticeable difference between the family  $(\sigma_\beta)_{\beta>0},$ on the one hand, and the pair  $\sigma_{Sz}, \sigma_{Fr},$ on the other hand, is that the latter constraints the (rescaled) surfactant concentration to take values in $[0,1)$ as expected on physical grounds. We refer to \cite{BN04, BGN03, CT13, EHLW11, EHLW12a, EHLW12b, GW03, Renardy96a, Renardy96b, Renardy97} and the references therein regarding numerical and well-posedness results for equations \eqref{i1}-\eqref{i2} and variants thereof.

\medskip

In this paper we are looking for traveling wave solutions to \eqref{i1}-\eqref{i2}, this means solutions of the form
\begin{equation}
(h,\gamma)(t,x) = (H,\Gamma)(x-ct)\ , \qquad (t,x)\in  (0,\infty)\times\RR\ , \label{i4}
\end{equation}
with velocity $c\in\RR$, $c\ne 0$. We focus on uniformly bounded traveling waves connecting a fully coated state ($\Gamma\sim 1$) to an uncoated state ($\Gamma\sim 0$), a situation corresponding to the spreading of the surfactant on the thin film. Substituting the ansatz \eqref{i4} in \eqref{i1}-\eqref{i2}, the pair of profiles $(H,\Gamma)$ is governed by
\begin{eqnarray}
c H + \frac{G H^3}{3} H' - \frac{H^2}{2} \sigma'(\Gamma) \Gamma' & = & K_1\ , \qquad \xi\in \RR\ , \label{ii5} \\
c \Gamma + \frac{G H^2}{2} \Gamma H' - \left[ H \Gamma \sigma'(\Gamma) - D \right] \Gamma' & = & K_2 \ , \qquad \xi\in \RR\ , \label{ii6}
\end{eqnarray}
for some $(K_1,K_2)\in \RR^2$ with the new variable $\xi:= x-ct$. The number of unknown parameters in \eqref{ii5}-\eqref{ii6} can be reduced after noticing that, if $(H,\Gamma)$ solves \eqref{ii5}-\eqref{ii6} for the parameters $(c,K_1,K_2)$ with $c\ne 0$, then $\xi\mapsto (H,\Gamma)(\xi/c)$ solves \eqref{ii5}-\eqref{ii6} for the parameters $(1,K_1/c,K_2/c)$. We may thus assume without loss of generality that $c=1$ throughout the paper.

\medskip
 
In contrast to previous investigations where traveling wave solutions are constructed for surfactant-driven flows of thin liquid films down an inclined plane with surface tension $\sigma_\infty$ depending linearly on the surfactant concentration \cite{LS06, LSW07, MS09, WSL06}, the thin film is here assumed to lie above a horizontal support and our aim is to consider the influence of a singularity in the surface tension such as \eqref{SzFr}. More precisely, we shall rather suppose herein that the surface tension $\sigma$ enjoys the following properties:
\begin{equation}
\sigma\in  C^1([0,1)) \;\text{ is a decreasing function with } \; \sigma'\not\in L_1(0,1)
\;\text{ and }\; \sigma_0 := \sigma(0) >0 \ . \label{i3}
\end{equation}
Note that $\sigma$ then realizes a non-increasing one-to-one mapping between $[0,1)$ and 
$(-\infty,\sigma_0]$ with
$$
\sigma' (z)\rightarrow -\infty\ \text{ as }\ z\rightarrow 1\, .
$$
Thus, letting 
$$
\varrho(z) := -1/\sigma'(z)\ , \quad  z \in [0,1)\ , \;\text{ and }\; \varrho(1) := 0\ ,
$$
we have that
\begin{equation}
\varrho \in C([0,1] , [0,\infty))\ \text{vanishes only at}\ z=1  \;\;\;\text{ and we put }\;\; R_\varrho := \|\varrho\|_{L_\infty(0,1)}\ . \label{rho1} 
\end{equation}
If $G>0$ we further assume that
\begin{equation}
\varrho\in \mathrm{Lip}([0,1))\ , \label{assumrho}
\end{equation}
that is, $\varrho$ is a {\em locally} Lipschitz continuous function on $[0,1)$ with local Lipschitz constant $L_z$ on $[0,z]$ possibly tending to $\infty$ as $z\to 1$. We point out that the Szyszkowski surface tension $\sigma_{Sz}$ and the Frumkin surface tension $\sigma_{Fr}$ defined in \eqref{SzFr}, with parameters $a$ and $b$ such that $b<2a$, satisfy \eqref{i3}-\eqref{assumrho}.

\medskip

With these assumptions, our main result reads:

\begin{theorem}\label{T1}
Assume that the surface tension $\sigma$ satisfies \eqref{i3} and that the function $\varrho$, defined in \eqref{rho1}, satisfies \eqref{assumrho} when $G>0$. Given $H_*>0$, there is at least one traveling wave solution to \eqref{i1}-\eqref{i2} with velocity $c=1$ and profile $(H,\Gamma)$ such that
\begin{align*}
&\lim_{\xi\to-\infty} H(\xi)=2H_*\ , & & \lim_{\xi\to\infty} H(\xi)= H_*\ ,\\
&\lim_{\xi\to-\infty} \Gamma(\xi)=1 \ , & & \lim_{\xi\to\infty} \Gamma(\xi)=0\ ,
\end{align*}
and $\Gamma$ is non-increasing. 
\end{theorem}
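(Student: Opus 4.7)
The first step is to specify $K_1,K_2$: letting $\xi\to+\infty$ in \eqref{ii5}--\eqref{ii6} with the limit $(H_*,0)$ and vanishing derivatives forces $K_1=H_*$ and $K_2=0$. The other limit state $(2H_*,1)$ is not a regular equilibrium of the resulting system, but it sits on the singular set where $\varrho(\Gamma)\to 0$ and $\sigma'(\Gamma)\to -\infty$; the plan is to exploit the balance between the two vanishings $2H_*-H\to 0$ and $\varrho(\Gamma)\to 0$ at the left end.

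\textbf{Reduction to $\Gamma$-dependence.} Since $\Gamma$ is sought to be non-increasing, I would treat $\Gamma\in(0,1)$ as the independent variable and look for $H$ as a function of $\Gamma$. Eliminating $\xi$-derivatives between \eqref{ii5} and \eqref{ii6} (with $K_1=H_*$, $K_2=0$) gives the scalar equation
\[
\frac{GH^2\Gamma(3H_*-H)}{3}\,\frac{\mathrm dH}{\mathrm d\Gamma}\;=\;2D(H-H_*)-\frac{H\Gamma(2H_*-H)}{\varrho(\Gamma)}\ ,
\]
together with the explicit formula $\Gamma'(\xi)=2\varrho(\Gamma)\Gamma(H-3H_*)\big/\bigl[H(H\Gamma+4D\varrho(\Gamma))\bigr]$, whose sign is automatically non-positive when $H\in(0,3H_*)$. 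When $G=0$ the first equation degenerates into the algebraic relation $H\Gamma(2H_*-H)=2D(H-H_*)\varrho(\Gamma)$, which is quadratic in $H$ with positive discriminant and yields an explicit continuous profile joining $H(0)=H_*$ to $H(1)=2H_*$. When $G>0$ it is a genuine first-order ODE on $(0,1)$, singular at $\Gamma\in\{0,1\}$ and at $H\in\{0,3H_*\}$.

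\textbf{Construction of $H(\Gamma)$ for $G>0$.} I would solve the ODE on shrinking subintervals $[\delta,1-\delta]$, where the coefficients are locally Lipschitz thanks to \eqref{assumrho}, starting from a value $H(1-\delta)$ close to $2H_*$ dictated by the algebraic balance $2H_*-H\sim D\varrho(\Gamma)$ suggested by the $G=0$ case. Invariant-region arguments based on the sign of $\mathrm dH/\mathrm d\Gamma$ at $H=H_*$ (negative) and at $H=2H_*$ (positive) would confine $H$ to $[H_*,2H_*]$, preventing any collision with the extraneous singular values $H=0$ and $H=3H_*$. Passing to the limit $\delta\to 0$ by standard compactness then yields a continuous profile $H:[0,1]\to[H_*,2H_*]$ with the correct endpoint values.

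\textbf{Reconstruction of the travelling wave and main obstacle.} Once $H(\Gamma)$ is in hand, the travelling-wave coordinate is recovered, up to translation, by integrating the scalar autonomous ODE given by the explicit expression for $\Gamma'$ above. The asymptotics at the two ends follow from direct computation: near $\Gamma=1$ one has $\Gamma'\sim-\varrho(\Gamma)/(2H_*)$, and the non-integrability of $1/\varrho$ near $1$, which is equivalent to $\sigma'\notin L_1(0,1)$ in \eqref{i3}, implies $\xi\to-\infty$ as $\Gamma\to 1$; near $\Gamma=0$ one has $\Gamma'\sim-\Gamma/D$ if $D>0$, yielding $\xi\to+\infty$, whereas if $D=0$ the trajectory reaches $\Gamma=0$ at some finite $\xi_0$ and the wave is continued for $\xi>\xi_0$ by the constant profile $(H_*,0)$, which is compatible with the original system and accounts for the difference in regularity mentioned in the abstract. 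The delicate step is clearly the left endpoint $\Gamma=1$: proving that the profile constructed on truncated intervals indeed converges to the correct value $H=2H_*$ with the sharp rate $2H_*-H\sim D\varrho(\Gamma)$ requires working in the rescaled variable $w:=(2H_*-H)/\varrho(\Gamma)$ and doing a refined asymptotic matching, and this is precisely where the Lipschitz hypothesis \eqref{assumrho} plays its role when $G>0$.
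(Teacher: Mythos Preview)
Your reduction to a scalar ODE for $H$ as a function of $\Gamma$, combined with the invariant-rectangle observation, is a genuinely different route from the paper's. The paper stays in the $\xi$-variable and performs a phase-plane shooting argument: it decomposes $(H_*,2H_*)\times(0,1)$ into four subregions according to the sign of $f_1$, classifies all forward trajectories emanating from the vertical sides $\{H_*\}\times(0,1)$ and $\{2H_*\}\times(0,1)$, and uses non-intersection of orbits to trap a global heteroclinic orbit between them (Lemmas~\ref{le.a1}--\ref{lem_step3} and Proposition~\ref{prop_step3}). Your approach is potentially more economical, since the signs $dH/d\Gamma<0$ at $H=H_*$ and $dH/d\Gamma>0$ at $H=2H_*$ do confine the backward-in-$\Gamma$ flow to $(H_*,2H_*)$, and a compactness limit on $[\delta,1-\delta]$ produces a solution on $(0,1)$; the divergence of $\int^1 d\Gamma/\varrho(\Gamma)$, which is exactly $\sigma'\notin L_1$, then forces both $\xi\to-\infty$ and $H\to 2H_*$ as $\Gamma\to 1$ by a direct integrability estimate, without the refined matching in $w$ that you flag as delicate.

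There is, however, a concrete error in your treatment of the case $D=0$ with $G>0$. Your own scalar equation with $D=0$ reduces to $GH(3H_*-H)\,dH/d\Gamma = -3(2H_*-H)/\varrho(\Gamma)$, whose relevant solution is $H\equiv 2H_*$ for all $\Gamma\in(0,1)$; hence at the finite point $\xi_0$ where $\Gamma$ reaches $0$ you have $H(\xi_0^-)=2H_*$, not $H_*$. Continuing by the constant profile $(H_*,0)$ would create a jump in $H$, which is admissible when $G=0$ (and the paper's solution indeed has that jump, see Proposition~\ref{P222}\,(i)), but is incompatible with the weak formulation when $G>0$, which requires $H^{5/2}\in W^1_{2,\mathrm{loc}}$. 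The correct continuation for $G>0$, $D=0$ is obtained from \eqref{i5} with $\Gamma\equiv 0$: one solves $G\hat H^3\hat H'=3(H_*-\hat H)$ on $(\xi_0,\infty)$ with $\hat H(\xi_0)=2H_*$, which drives $H$ continuously down to $H_*$. This is precisely the paper's construction in \eqref{c13}--\eqref{c13a}.
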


Let us point out that \eqref{ii5}-\eqref{ii6} is a system of ordinary differential equations for $(H,\Gamma)$ only when $G>0$, as the derivative of $H$ is no longer involved in the absence of gravity, $G=0$. Also, if surface diffusion is neglected, $D=0$, then \eqref{ii6} is independent of the derivative of $\Gamma$ where $H$ vanishes.

\medskip

Further properties and the regularities of $H$ and $\Gamma$ thus vary according to whether or not $G$ or $D$ are positive. We shall gather them in the following proposition:

\begin{proposition}\label{P222} Let $c=1$ and $H_*>0$.

\noindent (i) If both gravity and diffusion are neglected, that is, if $G=0$ and $D=0$, then $(H ,\Gamma)\in C(\R,\R^2)$ is continuously differentiable except at $\xi=0$ and  given by
\begin{equation}\label{cc1}
H(\xi) = \left\{
\begin{array}{ccl}
2H_* \ , & \xi\in (-\infty,0]\ , \\
 &  \\
 H_* \ , & \xi\in [0,\infty)\ ,
\end{array}
\right.
 \qquad 
\Gamma(\xi) = \left\{
\begin{array}{ccl}
\displaystyle{\sigma^{-1}\left( \sigma_0 + \frac{\xi}{2H_*} \right)}\ , &  \xi\in (-\infty,0]\ , \\
 &  \\
 0\ , & \xi\in [0,\infty)\ .
\end{array}
\right.
\end{equation}

\noindent (ii) If gravity is neglected but diffusion is taken into account, that is, if $G=0$ and $D>0$, then $H\in C(\R)$ is given by \eqref{b4a} (with $\mathcal{P}_\Gamma=\R$) while $\Gamma\in C^1(\R)$ is implicitly given by \eqref{210} and $(H,\Gamma)$ satisfies
 $$
H_* < H(\xi) < 2 H_* \ ,\quad 0< \Gamma(\xi)< 1\ ,\qquad \xi\in \mathbb{R}\ .
$$

\noindent (iii) If gravity is taken into account, but diffusion is neglected, that is, if $G>0$ and $D=0$, then
$(H,\Gamma)\in C(\R,\R^2)$ is continuously differentiable except at {$\xi = 0$}. Moreover, $H$ is decreasing on $[0,\infty)$ and equals $2H_*$ on $(-\infty,0]$, while $\Gamma\equiv 0$ on $[0,\infty)$.

\noindent (iv) If both gravity and diffusion are taken into account, that is, if $D>0$ and $G>0$, then  $(H,\Gamma)\in C^1(\R,\R^2)$ satisfies
$$
H_* < H(\xi) < 2 H_* \ , \quad 0< \Gamma(\xi)< 1\ ,\qquad \xi\in \mathbb{R}\ .
$$
\end{proposition}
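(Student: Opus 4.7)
\emph{Plan.} The approach is to treat the four cases separately, exploiting the first-order form of \eqref{ii5}-\eqref{ii6}. First, identify the constants: since $\Gamma\to 0$ as $\xi\to+\infty$ (where $\sigma'$ is finite) and $H\to H_*$ with vanishing derivatives, evaluating \eqref{ii5}-\eqref{ii6} at $+\infty$ forces $K_1 = H_*$ and $K_2 = 0$. Rewriting $\sigma'(\Gamma)\Gamma' = (\sigma\circ\Gamma)'$ makes the left-boundary compatibility visible: although $\sigma'(\Gamma)\to -\infty$ as $\xi\to -\infty$, the product $(\sigma\circ\Gamma)'$ must remain bounded, and a limit calculation in \eqref{ii5} forces $(\sigma\circ\Gamma)'\to 1/(2H_*)$.

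Cases (i) and (iii), for which $D=0$, are handled by the ``flat left'' ansatz $H\equiv 2H_*$ on $(-\infty,0]$. Under this ansatz, \eqref{ii5} reduces to $(\sigma\circ\Gamma)' = 1/(2H_*)$, which integrates explicitly under the normalization $\Gamma(0)=0$ to the formula for $\Gamma$ in \eqref{cc1}, and \eqref{ii6} is then automatically satisfied. On $[0,\infty)$, case (i) is trivially constant; in case (iii) plugging $\Gamma\equiv 0$ into \eqref{ii5} yields the scalar ODE
\[
H + \frac{G}{3} H^3 H' = H_* ,
\]
a separable first-order equation with a unique decreasing solution from $H(0) = 2H_*$ tending to $H_*$ as $\xi\to\infty$. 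Continuity at $\xi=0$ is automatic by the matching, while the discontinuity of $H'$ at $\xi=0$ is the source of the loss of $C^1$ regularity.

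Cases (ii) and (iv) are better viewed as reduced ODE problems on the open strip $\{H>0,\;0<\Gamma<1\}$. When $G=0$ (case (ii)), the derivative $H'$ disappears from \eqref{ii5}, which can be solved algebraically for $\sigma'(\Gamma)\Gamma'$ and substituted into \eqref{ii6} to produce the relation $H = 2H_*\Gamma/(\Gamma - D\Gamma')$ of \eqref{b4a}; the system then collapses to a single implicit scalar ODE for $\Gamma$, whose integration supplies \eqref{210}. When $G>0$ and $D>0$ (case (iv)), \eqref{ii5}-\eqref{ii6} form a $2\times 2$ linear system for $(H',\Gamma')$ whose determinant is proportional to $H^2\Gamma$ and hence does not vanish on the open strip; inversion yields a smooth ODE system with equilibria $(2H_*,1)$ and $(H_*,0)$, and the traveling wave of Theorem~\ref{T1} is realized as a $C^1$ heteroclinic orbit between them.

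The main obstacle, in both cases (ii) and (iv), is showing that the trajectory stays strictly inside the open strip, i.e.\ that $0<\Gamma<1$ and $H_*<H<2H_*$ for every $\xi\in\mathbb{R}$. The upper bound $\Gamma<1$ is precisely where the non-integrability assumption $\sigma'\notin L_1(0,1)$ from \eqref{i3} becomes decisive: passing to the variable $\sigma\circ\Gamma$ pushes the state $\Gamma=1$ ``to infinity'', so the profile cannot reach it in finite $\xi$. Once this is in hand, the strict bounds on $H$ follow from \eqref{b4a} in case (ii) and from the phase-plane description in case (iv), while the global $C^1$ regularity in case (iv) is a direct consequence of the smoothness of the reduced vector field.
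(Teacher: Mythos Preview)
Your plan for cases (i)--(iii) is essentially the paper's approach. The paper also finds $H\equiv 2H_*$ on the positivity set of $\Gamma$ when $D=0$ (by forming the combination $\Gamma\cdot\eqref{c1}-\tfrac{H}{2}\cdot\eqref{c2}$ rather than by ansatz), integrates $(\sigma\circ\Gamma)'=1/(2H_*)$ there, and continues with $\Gamma\equiv 0$ and the scalar ODE for $H$ on the right. Your derivation in case (ii) is equivalent to the paper's, though the relation $H = 2H_*\Gamma/(\Gamma - D\Gamma')$ you write is \emph{not} \eqref{b4a}; it is an intermediate identity from which one still has to eliminate $\Gamma'$ via \eqref{b1} to reach the quadratic \eqref{b3} and then the explicit formula \eqref{b4a}.

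There are two genuine problems in case (iv). First, the determinant of the $2\times2$ linear system for $(H',\Gamma')$ is not ``proportional to $H^2\Gamma$''. A direct computation from \eqref{i5}--\eqref{i6} gives
\[
\det \;=\; \frac{G H^3}{12}\big(4D - H\Gamma\,\sigma'(\Gamma)\big) \;=\; \frac{G H^3}{12}\cdot\frac{H\Gamma + 4D\,\varrho(\Gamma)}{\varrho(\Gamma)}\,,
\]
which is positive on $(0,\infty)\times(0,1)$ because $\sigma'<0$ and $D>0$; this is precisely the paper's observation \eqref{d3}. Your stated determinant would vanish at $\Gamma=0$, which would be trouble exactly at the equilibrium $(H_*,0)$ you need.

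Second, and more importantly, saying that ``the traveling wave of Theorem~\ref{T1} is realized as a $C^1$ heteroclinic orbit'' and that the strict bounds on $H$ ``follow from the phase-plane description'' skips the entire substance of the paper's Section~\ref{sec_32}. Under \eqref{assumrho} alone the reduced vector field is only locally Lipschitz on $(0,\infty)\times(0,1)$, and $\varrho$ need not even be differentiable at $1$, so no linearization or stable-manifold argument is available at $(2H_*,1)$. The paper instead runs a shooting argument: it isolates the trajectory through a distinguished interior point $(\overline{H},\overline{\Gamma})$ (Lemma~\ref{le.a1}), classifies how orbits started on the vertical sides $\{H_*\}\times(0,1)$ and $\{2H_*\}\times(0,1)$ cross the segment $\{\overline{H}\}\times(0,\overline{\Gamma}]$ (Lemmas~\ref{le.a2}--\ref{le.a3}), proves these two crossing sets leave a nonempty gap $[X_r,X_l]$ (Lemma~\ref{lem_step3}), and extracts the global orbit from that gap (Proposition~\ref{prop_step3}). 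Your non-integrability idea for $\Gamma<1$ is the correct ingredient in the backward direction (it is exactly what drives the proof of Proposition~\ref{prop_app1}(b)), but by itself it does not prevent $H$ from hitting $H_*$ or $2H_*$ at some finite $\xi<0$; ruling that out is precisely what the shooting argument is for, and it is the hard part of the proof.
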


In all cases, the profile $(H,\Gamma)$ is not regular enough and $(h,\gamma)$, defined through \eqref{i4}, does not satisfy \eqref{i1}-\eqref{i2} in a classical sense. However, it is a weak solution in a sense made precise in Definition~\ref{def_ws} below, which is adapted from the definition introduced in \cite{EHLW11}.

\medskip

The outline of the paper is as follows.  In the next section, besides providing system \eqref{i1}-\eqref{i2} with a definition of weak solutions, we state some elementary facts on traveling waves. In Section~\ref{sec_legere} we first  construct traveling waves when gravity is neglected, $G=0$. In Section~\ref{sec_grave} we then focus on the case with $G>0$. 
Finally, in Section~\ref{stat_sol} we briefly discuss the construction of stationary solutions, that is, the case $c=0$.

\section{Generalities on \eqref{i1}-\eqref{i2} and traveling waves}\label{sec_general}

Motivated by the option to allow for non-smooth traveling wave solutions for \eqref{i1}-\eqref{i2}, we introduce the following notion of weak solutions to system \eqref{i1}-\eqref{i2}.

\begin{definition} \label{def_ws}
Given a bounded open interval $I$, a pair $(h,\gamma)\,:\, I\times\mathbb{R}\to \mathbb{R}^2$ is called a {\em{weak solution}} to \eqref{i1}-\eqref{i2} on $I$, if
\begin{itemize}
\item[(i)] $h\in L_{\infty}(I,L_{2,loc}(\mathbb{R})) \cap L_2(I,L_{\infty,loc}(\mathbb R))$ with $G h^{5/2} \in L_2(I , W^{1}_{2,loc}(\mathbb{R}))$ satisfies
$$
h(t,x) \geq 0 \quad \text{ for a.a. $(t,x) \in I \times \mathbb{R}$}\,,   
$$ 
\item[(ii)] $\gamma \in L_{\infty}(I\times \mathbb{R})$ with $D \gamma \in L_2(I,W^{1}_{2,loc}(\mathbb{R}))$ satisfies
$$
\gamma(t,x) \in [0,1) \quad \text{ for a.a. $(t,x) \in I \times \mathbb{R}$,}
$$
\item[(iii)] $\sigma(\gamma) \in L_2(I,W^1_{2,loc}(\mathbb{R})),$   
\item[(iv)] for all $\varphi \in C_0^\infty(I \times \mathbb{R}),$ there holds 
\begin{eqnarray}
\int_{I} \int_{\mathbb{R}}   \left(h\, \partial_t \varphi + \dfrac{h^2}{2} \partial_x \sigma(\gamma) \, \partial_x\varphi - \dfrac{G}{12} \partial_x h^4 \, \partial_x \varphi \right) {\rm d}x{\rm d}t &=& 0\,, \label{eq_i1weak}\\
\int_{I} \int_{\mathbb R}   \left( \gamma\partial_t \varphi + h \gamma  \partial_x \sigma(\gamma) \, \partial_x \varphi - D \partial_x \gamma \, \partial_x \varphi -  \dfrac{G}{6} \gamma \partial_x h^3 \, \partial_x\varphi\right) {\rm d}x{\rm d}t &=& 0\,. \label{eq_i2weak}
\end{eqnarray}
\end{itemize}
Given an unbounded open interval $I,$ a pair $(h,\gamma)$ is a weak solution to \eqref{i1}-\eqref{i2} on $I$, if it is 
a weak solution to \eqref{i1}-\eqref{i2} on any bounded open subinterval of $I$.
\end{definition}

\begin{remarks} {\bf (a)} {\rm Definition~\ref{def_ws} is reminiscent of \cite[Theorem 1.1]{EHLW11}, where system \eqref{i1}-\eqref{i2} is considered on a finite  spatial interval, complemented with homogeneous Neumann boundary conditions. The situation studied in \cite{EHLW11} is, however, slightly different as both parameters $D$ and $G$ are strictly positive and the surface tension $\sigma$ grows at most linearly in $\gamma$.} \\[0.7ex]
{\bf (b)} {\rm In \cite[Theorem 1.1]{EHLW11} the regularity of weak solutions is included in their construction. The main tool to guarantee this regularity is an {\em a priori} estimate based on an energy functional, established first for strictly positive solutions in \cite[Proposition 3.2]{EHLW12a} and then extended to non-negative solutions in \cite[Lemma 2.7]{EHLW11}.
Roughly speaking, we ask here for the regularity inherited by such weak solutions also in case when $G$ or $D$ vanish. It should be noted that the energy estimate is no longer available if $G=0$.} \\[0.7ex]
{\bf (c)} {\rm Recall that $\sigma'(z)$ blows up at $z = 1.$ Hence, condition~(iii) implies formally that $\gamma$ must remain below $1$ almost everywhere. We introduce this explicitly in condition~(ii).} \\[0.7ex]
{\bf (d)} {\rm Finally, the weak formulation \eqref{eq_i1weak}-\eqref{eq_i2weak} is obtained from \eqref{i1}-\eqref{i2} by assuming that the solution $(h,\gamma)$  is sufficiently smooth, multiplying by a test function $\varphi$ and integrating by parts. In particular, when $D>0$, if a pair $(h,\gamma)$ with regularity
\begin{eqnarray}
 h \in C^1(I\times\mathbb{R}) \,,\quad \gamma \in C^{2}(I\times\mathbb{R}) && \text{ if }\ G=0\ , \\
 h \in C^2(I\times\mathbb{R}) \,,\quad \gamma \in C^{2}(I\times\mathbb{R}) && \text{ if }\ G>0\ ,
 \end{eqnarray}
satisfies equations \eqref{i1}-\eqref{i2} pointwisely together with $\gamma(t,x) < 1$ for all $(t,x)$ in $I\times\mathbb{R}$, then it is a weak solution to \eqref{i1}-\eqref{i2} on $I$ in the sense of Definition~\ref{def_ws}.}
\end{remarks}
 
As noted in the introduction, the profile $(H,\Gamma)$ of a traveling wave solution satisfies \eqref{ii5}-\eqref{ii6} in the new variable $\xi= x-ct$ for some $c\in\R$ and $(K_1,K_2)\in \RR^2$ and we recall that we may assume without loss of generality that $c=1$ if $c\ne 0$. Since we are mainly interested in traveling waves connecting a fully coated state ($\Gamma\sim 1$) to an uncoated state ($\Gamma\sim 0$), we expect that $\Gamma$ decays to zero as $\xi\to\infty$ and that $\Gamma'$ decays to zero as $\xi\to\pm \infty$. Inserting these properties in \eqref{ii6} and assuming that $H$ is bounded and $H'$ decays to zero as $\xi\to\pm \infty$, we conclude that $K_1=0$ if $c=0$ and that $K_2=0$ whatever the value of $c$. The case $c=0$ corresponds to stationary solutions which turn out to be uninteresting from a physical viewpoint. For this reason we focus our attention on the case $c\ne 0$ and postpone the discussion of $c=0$ to Section~\ref{stat_sol}. Consequently, setting $H_*:=K_1$, we look for global solutions $(H,\Gamma)$ to the following system
\begin{align}
H + \frac{G H^3}{3} H' - \frac{H^2}{2} \sigma'(\Gamma) \Gamma' & = H_*\ , &&\xi\in \RR\ , \label{i5} \\
\Gamma + \frac{G H^2}{2} \Gamma H' - \left[ H \Gamma \sigma'(\Gamma) - D \right] \Gamma' & =  0 \ ,  &&\xi\in \RR\ , \label{i6}\\
H\ge 0\ ,\qquad 0\le \Gamma &< 1\ , &&\xi\in \RR\ , \label{i6i}
\end{align}
satisfying in addition
\begin{equation}
\lim_{\xi\to -\infty} \Gamma(\xi) = 1\ , \qquad \lim_{\xi\to \infty} \Gamma(\xi) = 0\ , \label{limGam}
\end{equation} 
where $H_*$ is a non-negative parameter yet to be determined.  Clearly, for $H_*\ge 0$, a solution to \eqref{i5}-\eqref{i6i} is given by $(H,\Gamma)=(H_*,0)$, but it does not satisfy \eqref{limGam}.

\medskip

Solutions to \eqref{i5}-\eqref{i6i} generate traveling wave (weak) solutions to \eqref{i1}-\eqref{i2} as we show now.

\begin{lemma}\label{TWws}
Let $(H,\Gamma)$ be a global solution to \eqref{i5}-\eqref{i6i} such that $\Gamma$ is continuous and piecewise $C^1$-smooth and either $H$ is piecewise continuous if $G=0$ or $H^{5/2}$ is continuous and piecewise $C^1$-smooth if $G>0$. Then $(h,\gamma)$ defined on $\RR\times\RR$ by $(h,\gamma)(t,x):=(H,\Gamma)(x-t)$ is a weak solution to \eqref{i1}-\eqref{i2} on $\R$.
\end{lemma}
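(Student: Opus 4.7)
The strategy is to reduce the weak formulation of Definition~\ref{def_ws} to the pointwise identities \eqref{i5}--\eqref{i6}, which hold by hypothesis. The natural tool is the change of variables associated with the traveling wave ansatz: given $\varphi\in C_0^\infty(I\times\mathbb{R})$, set $\tilde\varphi(\tau,\xi):=\varphi(\tau,\xi+\tau)$. Then $\tilde\varphi\in C_0^\infty(I\times\mathbb{R})$ and the identities
$$
(\partial_x\varphi)(t,x)=(\partial_\xi\tilde\varphi)(t,x-t),\qquad (\partial_t\varphi)(t,x)=(\partial_\tau\tilde\varphi)(t,x-t)-(\partial_\xi\tilde\varphi)(t,x-t)
$$
will convert the $(t,x)$-integrals in \eqref{eq_i1weak}--\eqref{eq_i2weak} into $(\tau,\xi)$-integrals whose integrand, after collecting coefficients of $\partial_\xi\tilde\varphi$, is exactly a constant multiple of \eqref{i5} or \eqref{i6}.

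The first step is to verify items (i)--(iii) of Definition~\ref{def_ws}. Since $h(t,x)=H(x-t)$ and $\gamma(t,x)=\Gamma(x-t)$ with $\Gamma$ continuous and $\Gamma(\xi)\in[0,1)$ pointwise, on any bounded set of the form $I\times[-R,R]$ the argument $\xi=x-t$ lies in a fixed compact interval where $\Gamma$ is bounded away from $1$; this yields $\gamma\in L_\infty(I\times\mathbb{R})$ together with local boundedness of $\sigma(\gamma)$. The piecewise-$C^1$ regularity of $\Gamma$ then gives $\partial_x\sigma(\gamma)\in L_{2,loc}$ and, if $D>0$, also $\partial_x\gamma\in L_{2,loc}$. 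For $G>0$, the assumption that $H^{5/2}$ is continuous and piecewise $C^1$ provides $G h^{5/2}\in L_2(I,W^1_{2,loc}(\mathbb{R}))$; for $G=0$ condition (i) only requires $h\geq0$ measurable with mild local integrability, which is immediate since $H$ is piecewise continuous and bounded on compact $\xi$-intervals by \eqref{i5}.

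For step (iv), substitute $\xi=x-t$ in \eqref{eq_i1weak}, use Fubini, and note that $\int_I H(\xi)\,\partial_\tau\tilde\varphi(\tau,\xi)\,d\tau=0$ because $\tilde\varphi$ has compact support in $\tau$. Treating the flux terms analogously (using $(h^4)'=\tfrac{8}{5}H^{3/2}(H^{5/2})'$ a.e.\ when $G>0$ to make sense of $\partial_x h^4$ without needing $H$ itself to be $C^1$) leads to
$$
\int_I\!\!\int_\mathbb{R}\partial_\xi\tilde\varphi(\tau,\xi)\left[-H(\xi)+\frac{H^2(\xi)}{2}\sigma'(\Gamma(\xi))\Gamma'(\xi)-\frac{G H^3(\xi)}{3}H'(\xi)\right]d\xi\,d\tau,
$$
and by \eqref{i5} the bracket is the constant $-H_*$; the integral then vanishes because $\partial_\xi\tilde\varphi$ integrates to zero in $\xi$. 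The identical computation applied to \eqref{eq_i2weak} reduces the bracket to $-\Gamma(\xi)+[H\Gamma\sigma'(\Gamma)-D]\Gamma'(\xi)-\tfrac{G H^2}{2}\Gamma H'(\xi)$, which is zero by \eqref{i6}. The main obstacle is bookkeeping: one must justify that the change of variables and Fubini apply under the piecewise regularity assumed (no integration by parts in $\xi$ is performed, so only pointwise a.e.\ validity of \eqref{i5}--\eqref{i6} and local integrability of the individual terms are needed), and one must connect the regularity assumption on $H^{5/2}$ to the flux $\partial_x h^4$ via the chain rule for absolutely continuous functions. Once these are in place, the lemma follows.
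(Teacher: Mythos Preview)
Your proposal is correct and follows essentially the same route as the paper: the change of variables $\xi=x-t$ together with the identity $\partial_t\varphi(t,\xi+t)=\tfrac{d}{dt}[\varphi(t,\xi+t)]-\partial_x\varphi(t,\xi+t)$ reduces \eqref{eq_i1weak} to $\int(-H_*)\,\partial_\xi\tilde\varphi=0$ and \eqref{eq_i2weak} to $\int 0\cdot\partial_\xi\tilde\varphi=0$ via \eqref{i5}--\eqref{i6}. Your more explicit verification of items (i)--(iii) and the chain-rule remark $(H^4)'=\tfrac{8}{5}H^{3/2}(H^{5/2})'$ simply flesh out points the paper leaves implicit.
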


\begin{proof}
The conditions (i)-(iii) of Definition~\ref{def_ws} being clearly satisfied, it remains to check (iv). To this end, we note that, given $\varphi\in C_0^\infty(\R\times \R)$, 
\begin{align*}
\int_{\R\times\R} h(t,x) \partial_t \varphi(t,x)\ \mathrm{d}(x,t) = & \int_{\R\times\R} H(\xi) \partial_t \varphi(t,\xi+t)\ \mathrm{d}(\xi,t) \\
= & \int_{\R} H(\xi) \int_{\R} \frac{\mathrm{d}}{\mathrm{d}t} \left[ \varphi(t,\xi+t) \right]\ \mathrm{d}t\ \mathrm{d}\xi -  \int_{\R\times\R} H(\xi) \partial_x \varphi(t,\xi+t)\ \mathrm{d}(\xi,t) \\ 
= & -  \int_{\R\times\R} h(t,x) \partial_x \varphi(t,x)\ \mathrm{d}(x,t)\ .
\end{align*}
The left-hand side of \eqref{eq_i1weak} then reads
\begin{align*} 
& \int_{\R\times\R} \left( - h + \frac{h^2}{2} \partial_x \sigma(\gamma) - \frac{G}{12} \partial_x h^4 \right)(t,x) \partial_x\varphi(t,x)\ \mathrm{d}(x,t)\\
= & \int_{\R\times\R} \left( - H + \frac{H^2}{2} \sigma'(\Gamma) \Gamma' - \frac{G}{12}  (H^4)' \right)(\xi)\ \partial_x\varphi(t,\xi+t)\ \mathrm{d}(\xi,t) = 0\ ,
\end{align*}
thanks to \eqref{i5}. A similar argument applies to \eqref{eq_i2weak}. 
\end{proof}

\section{Bounded traveling waves without gravity} \label{sec_legere}

Observe that if gravitational effects are neglected, that is, if $G=0$, then \eqref{i5} is an algebraic equation for $H$. We may thus solve this equation for $H$ in terms of $\Gamma$ and so reduce the problem to a single ordinary differential equation for $\Gamma$.

\subsection{No surface diffusion $D=0$}\label{S3.1}

Neglecting diffusion as well reduces \eqref{i5}-\eqref{i6} to 
\begin{align}
2H - H^2 \sigma'(\Gamma) \Gamma' & =  2 H_*\ , & \xi\in \RR\ , \label{a1}\\
\Gamma - H \Gamma \sigma'(\Gamma) \Gamma' & =  0 \ , & \xi\in \RR\ . \label{a2}
\end{align}
Multiplying \eqref{a1} by $\Gamma$ and \eqref{a2} by $-H$ and adding the result we obtain
$$
\Gamma \left( H - 2 H_* \right) = 0\ ,
$$
whence
\begin{equation*}
H(\xi) = 2 H_* \ , \quad \xi \in \mathcal{P}_\Gamma := \left\{ \zeta\in\RR\ : \ \Gamma(\zeta) > 0 \right\}\ . \label{a3}
\end{equation*}
Observe that \eqref{i6i} requires $H_*\ge 0$. Next, \eqref{a2} gives
$$
2 H_* \sigma'(\Gamma) \Gamma' = 1 \;\;\text{ in }\;\; \mathcal{P}_\Gamma\ .
$$
Since $\sigma'<0$ by \eqref{i3}, it readily follows from the previous identity that $H_*>0$ and $\Gamma$ is decreasing on $\mathcal{P}_\Gamma$. Therefore, $\mathcal{P}_\Gamma$ is an interval of the form $(-\infty,\xi_0)$ for some $\xi_0\in (-\infty,\infty]$ and 
\begin{equation}\label{plus}
\sigma(\Gamma(\xi)) = K_3 + \frac{\xi}{2H_*}\ , \qquad \xi\in (-\infty,\xi_0)\ ,
\end{equation}
for some constant $K_3\in\RR$. As $\sigma$ maps $[0,1)$ onto $(-\infty,\sigma_0]$ by  \eqref{i3}, we deduce that $\xi_0 = 2 H_* (\sigma_0-K_3)$ and thus $\xi_0<\infty$. Owing to the translation invariance of \eqref{i5}-\eqref{i6}, we may then choose $\xi_0=0$ so that $\mathcal{P}_\Gamma = (-\infty,0)$ and
\begin{equation}
\Gamma(\xi) = \left\{
\begin{array}{cl}
\displaystyle{\sigma^{-1}\left( \sigma_0 + \frac{\xi}{2H_*}  \right)} \ ,& \xi\in (-\infty,0]\ , \\
 & \\
 0 \ , & \xi\in [0,\infty)\ .
\end{array}
\right. \label{a4}
\end{equation}
Clearly, $\Gamma$ is non-increasing on $\R$. Since $H=H_*$ in $\RR\setminus\mathcal{P}_\Gamma =[0,\infty)$ by \eqref{a1} and \eqref{a4}, we conclude 
\begin{equation}
H(\xi) = \left\{
\begin{array}{cl}
2H_* \ , & \xi\in (-\infty,0]\ , \\
 & \\
 H_* \ , & \xi\in [0,\infty)\ .
\end{array}
\right. \label{a5}
\end{equation}
 We finally deduce from Lemma~\ref{TWws} that we obtain from \eqref{a4}-\eqref{a5} a weak solution $(h,\gamma)$ to \eqref{i1}-\eqref{i2} on $\R$ by \eqref{i4}. This proves Theorem~\ref{T1} and Proposition~\ref{P222} when $G=0$ and $D=0$.

\subsection{Including surface diffusion $D>0$}

Still neglecting gravity but now including surface diffusion, $(H,\Gamma)$ solves
\begin{align}
2 H - H^2 \sigma'(\Gamma) \Gamma' & =  2 H_*\ , & \xi\in \RR\ , \label{b1} \\
\Gamma - \left[ \Gamma \sigma'(\Gamma) H - D \right] \Gamma' & =  0 \ , & \xi\in \RR\ . \label{b2}
\end{align}
Multiplying \eqref{b1} by $(\Gamma \sigma'(\Gamma) H - D)$ and \eqref{b2} by $-\sigma'(\Gamma) H^2$ and adding the resulting identities give a quadratic equation satisfied by $H$ which reads
\begin{equation}
\Gamma \sigma'(\Gamma) H^2 - 2 \left( D + \Gamma \sigma'(\Gamma) H_* \right) H + 2 D H_* = 0\ . \label{b3}
\end{equation}
The equation \eqref{b3} reduces to the identity $H=H_*$ if $\Gamma=0$ while it has only one positive zero whatever the value of $\Gamma>0$ which is given by
$$
H = \frac{D + \Gamma \sigma'(\Gamma) H_* - \sqrt{D^2 + (\Gamma \sigma'(\Gamma) H_*)^2}}{\Gamma \sigma'(\Gamma)}
$$
(recall that $\sigma'<0$ by \eqref{i3}). Consequently, introducing again the positivity set $\mathcal{P}_\Gamma$ defined by 
$$
\mathcal{P}_\Gamma := \left\{ \xi\in\RR\ : \ \Gamma(\xi) > 0 \right\}\ ,
$$
we conclude that
\begin{subequations}
\begin{equation}
H(\xi) = \frac{D + \Gamma \sigma'(\Gamma) H_* - \sqrt{D^2 + (\Gamma \sigma'(\Gamma) H_*)^2}}{\Gamma \sigma'(\Gamma)}(\xi) \ , \quad \xi\in \mathcal{P}_\Gamma\ , \label{b4a}
\end{equation}
\begin{equation}
 H(\xi) = H_* \ , \quad \xi\not\in\mathcal{P}_\Gamma\ . \label{b4b}
\end{equation}
\label{b4}
\end{subequations}
We now infer from \eqref{b2} and \eqref{b4a} that
$$
\left[ \Gamma \sigma'(\Gamma) H_* - \sqrt{D^2 + (\Gamma \sigma'(\Gamma) H_*)^2} \right] \frac{\Gamma'}{\Gamma} = 1 \ , \qquad \xi\in\mathcal{P}_\Gamma\ .
$$
Since $\sigma'<0$ by \eqref{i3} and since \eqref{i5}-\eqref{i6} is invariant with respect to translations, we readily deduce that $\Gamma$ is decreasing on $\mathcal{P}_\Gamma$ and that there is $\xi_0\in (0,\infty]$ such that $\mathcal{P}_\Gamma = (-\infty,\xi_0)$. Integrating the previous differential equation, we end up with
\begin{equation}
\int_{\Gamma(0)}^{\Gamma(\xi)} \left[ z \sigma'(z) H_* - \sqrt{D^2 + (z \sigma'(z) H_*)^2} \right] \frac{\mathrm{d}z}{z} = \xi\ , \qquad \xi\in (-\infty,\xi_0)\ . \label{210}
\end{equation}
Moreover, owing to \eqref{i3} we have
$$
\frac{z \sigma'(z) H_* - \sqrt{D^2 + (z \sigma'(z) H_*)^2}}{z} \mathop{\sim}_{z\to 1} 2 H_* \sigma'(z) \;\;\text{ and }\;\; \frac{z \sigma'(z) H_* - \sqrt{D^2 + (z \sigma'(z) H_*)^2}}{z} \mathop{\sim}_{z\to 0} - \frac{D}{z}\ ,
$$
from which we conclude that necessarily $\xi_0=\infty$, that is, $\mathcal{P}_\Gamma=\RR$, and
\begin{equation}
\lim_{\xi\to -\infty} \Gamma(\xi) = 1\ , \qquad \lim_{\xi\to \infty} \Gamma(\xi) = 0\ . \label{b6}
\end{equation}
Consequently, we realize that $H$ is given by \eqref{b4a}  for $\xi \in \mathbb R$ and satisfies 
$$
H_* < H(\xi) < 2 H_* \ ,\qquad \xi\in \mathbb{R}\ ,
$$
with
$$
\lim_{\xi\to -\infty} H(\xi) = 2H_*\ , \qquad \lim_{\xi\to \infty} H(\xi) = H_*\ ,
$$
thanks to \eqref{i3} and \eqref{b6}. Observe that $\Gamma$ is $C^1$-smooth by applying \eqref{210} (recall that $\sigma \in C^1([0,1))$ and $\Gamma(\RR) \subset (0,1)$) and that $H$ is thus  continuous by \eqref{b4a}. Therefore, we have shown Theorem~\ref{T1} and Proposition~\ref{P222} when $G=0$ and $D>0$.

\section{Bounded traveling waves with gravity} \label{sec_grave}

We shall now include gravity by taking $G>0$ in \eqref{i5} in which case this is also an ordinary differential equation. Thus, we have to use a different approach than before and  assume now that \eqref{assumrho} holds.

\subsection{No surface diffusion $D=0$}

We combine \eqref{i5}-\eqref{i6} to obtain a system of differential equations for $(H,\Gamma)$. More precisely, recalling that
\begin{align}
H + \frac{G H^3}{3} H' - \frac{H^2}{2} \sigma'(\Gamma) \Gamma' & =  H_*\ , & \xi\in \RR\ , \label{c1} \\
\Gamma + \frac{G}{2} \Gamma  H^2 H' - \Gamma \sigma'(\Gamma) H \Gamma' & =  0 \ , & \xi\in \RR\ , \label{c2}
\end{align}
we multiply \eqref{c1} by $12\Gamma$ and \eqref{c2} by $-6H$ and add the resulting identities to obtain 
\begin{equation}
\Gamma \left[ G H^3 H' + 6 \left( H- 2 H_* \right) \right] = 0\ , \qquad \xi\in\RR \label{c3}\ .
\end{equation}
We next multiply \eqref{c1} by $6\Gamma$ and \eqref{c2} by $-4H$ and find, after adding the resulting identities,
\begin{equation}
\Gamma \left[ \sigma'(\Gamma) H^2 \Gamma' + 2 \left( H - 3H_* \right) \right] = 0\ , \qquad \xi\in\RR\ . \label{c4}
\end{equation}
Observe that \eqref{c3} and \eqref{c4} are obviously satisfied in the zero set of $\Gamma$.
In order to analyze \eqref{c3}-\eqref{c4}, we first investigate the behavior of the following auxiliary system of ordinary differential equations:
\begin{eqnarray}
H^3 H' & = & \frac{6}{G} \left( 2 H_* - H \right)\ , \qquad \xi\in \mathbb{R}\ , \label{c5}\\
H^2 \sigma'(\Gamma) \Gamma' & = & 2 \left( 3H_* - H \right)\ , \qquad \xi\in \mathbb{R}\ . \label{c6}
\end{eqnarray}
Recalling \eqref{assumrho}, non-extendable solutions to this system of ordinary differential equations for $(H,\Gamma)$ in $(0,\infty) \times (0,1)$ are provided by the Cauchy-Lipschitz theorem.

\begin{lemma}\label{PL}
Given $(H_0,\Gamma_0)\in (0,\infty)\times (0,1)$, there are unique $\xi_\omega>0$, $\xi_\alpha>0$, and a unique non-extendable solution 
$$
(H,\Gamma) \in C^1((-\xi_\alpha,\xi_\omega),(0,\infty)\times (0,1))
$$
to \eqref{c5}-\eqref{c6} with $(H,\Gamma)(0)=(H_0,\Gamma_0)$. 
At a possibly finite boundary point of $(-\xi_\alpha,\xi_\omega)$, the function~$H$ approaches zero or becomes unbounded, or the function $\Gamma$ approaches zero or $1$. 
\end{lemma}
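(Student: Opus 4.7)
The plan is to recast the algebraic-looking system \eqref{c5}-\eqref{c6} as an explicit first-order autonomous ODE system on $\Omega := (0,\infty)\times(0,1)$ and then apply the Cauchy-Lipschitz theorem together with the standard blow-up alternative. Since $H>0$ on $\Omega$ and $\sigma'(\Gamma)<0$ on $(0,1)$ by \eqref{i3}, dividing \eqref{c5} by $H^3$ and \eqref{c6} by $H^2\sigma'(\Gamma)$ and using $\varrho(\Gamma)=-1/\sigma'(\Gamma)$ yields the equivalent system
$$
(H',\Gamma')=\mathcal F(H,\Gamma),\qquad \mathcal F(H,\Gamma):=\left(\frac{6(2H_*-H)}{GH^3},\ -\frac{2(3H_*-H)\varrho(\Gamma)}{H^2}\right).
$$

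The first step is to verify that $\mathcal F\colon\Omega\to\mathbb R^2$ is locally Lipschitz continuous. The $H$-component depends only on $H$ and is smooth on $(0,\infty)$; the $\Gamma$-component is smooth in $H$ on $(0,\infty)$, while its local Lipschitz dependence on $\Gamma$ on $[0,1)$ is exactly what assumption \eqref{assumrho} provides. More precisely, on any compact set of the form $[a,A]\times[0,b]\subset\Omega$ with $0<a<A$ and $b<1$, the Lipschitz constant of $\mathcal F$ can be bounded explicitly in terms of $a$, $A$, $H_*$, $G$ and the local Lipschitz constant $L_b$ of $\varrho$. The Cauchy-Lipschitz (Picard-Lindel\"of) theorem then delivers, for every $(H_0,\Gamma_0)\in\Omega$, a unique non-extendable solution $(H,\Gamma)\in C^1((-\xi_\alpha,\xi_\omega),\Omega)$ with $(H,\Gamma)(0)=(H_0,\Gamma_0)$ and $\xi_\alpha,\xi_\omega>0$.

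To conclude, I would invoke the classical blow-up alternative: if $\xi_\omega<\infty$, then the trajectory $(H,\Gamma)(\xi)$ must leave every compact subset of $\Omega$ as $\xi\uparrow\xi_\omega$, and analogously at $-\xi_\alpha$ if $\xi_\alpha<\infty$. Since $\Omega=(0,\infty)\times(0,1)$ is exhausted by compact sets of the form above, leaving every such compact forces one of the four boundary behaviors listed in the statement: $H\to 0$, $H\to\infty$, $\Gamma\to 0$ or $\Gamma\to 1$. I do not expect any real obstacle here, since this is textbook ODE material; the only subtle point worth emphasizing is that \eqref{assumrho} supplies Lipschitz control on $\varrho$ only over intervals $[0,b]$ with $b<1$, so that $\Gamma\to 1$ genuinely has to be admitted as a possible singular behavior rather than ruled out at this stage. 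Any further exclusion of specific alternatives will need to be extracted later from the actual dynamics of \eqref{c5}-\eqref{c6}, not from the local existence theory.
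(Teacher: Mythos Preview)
Your proposal is correct and is exactly the approach the paper takes: the paper simply states, just before the lemma, that ``non-extendable solutions to this system of ordinary differential equations for $(H,\Gamma)$ in $(0,\infty)\times(0,1)$ are provided by the Cauchy-Lipschitz theorem'' and gives no further proof. You have faithfully unpacked that one-liner, including the role of \eqref{assumrho} in securing local Lipschitz continuity of the $\Gamma$-component and the standard escape-from-compacts alternative for the boundary behavior.
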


As we now shall see, the behavior of $(H,\Gamma)$ varies according to the value of $H_0$.

\begin{lemma}\label{L22}
\noindent (i) If $H_0=2H_*$, then
\begin{equation}\label{uu}
\xi_\alpha = \infty \ ,\qquad \xi_\omega = 2 H_* \left( \sigma_0 - \sigma(\Gamma_0) \right) < \infty\ . 
\end{equation}
Moreover, $H\equiv 2H_*$ on $(-\infty,\xi_\omega]$ while $\Gamma$ is a decreasing function from $(-\infty,\xi_\omega]$ onto $[0,1)$.

\noindent (ii) If $H_0\in (0,2H_*)$, then both $\xi_\alpha$ and $\xi_\omega$ are finite with $H(-\xi_\alpha)=0$ and $\Gamma(\xi_\omega)=0$. Moreover,~$H$ is increasing and $\Gamma$ is decreasing on $(-\xi_\alpha,\xi_\omega)$.

\noindent (iii) If $H_0\in (2H_*,\infty)$, then both $\xi_\alpha$ and $\xi_\omega$ are finite with $\Gamma(-\xi_\alpha)=\Gamma(\xi_\omega)=0$. Moreover, $H$ is decreasing on $(-\xi_\alpha,\xi_\omega)$ and stays above $2H_*$.
\end{lemma}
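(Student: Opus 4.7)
The plan is to exploit three features of the auxiliary system \eqref{c5}--\eqref{c6}. First, the constant function $H\equiv 2H_*$ is a solution of \eqref{c5}, so by Cauchy--Lipschitz uniqueness no other trajectory may cross the level $H=2H_*$. Second, whenever $H$ is strictly monotone, dividing \eqref{c6} by \eqref{c5} produces the autonomous orbit equation
$$
\frac{\mathrm{d}}{\mathrm{d}H}\sigma(\Gamma) = f(H) := \frac{GH(3H_*-H)}{3(2H_*-H)},
$$
which I intend to analyse by explicit integration. Third, the travel time along each orbit is recovered from $\mathrm{d}\xi/\mathrm{d}H = GH^3/[6(2H_*-H)]$. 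Case~(i) then follows at once: uniqueness gives $H\equiv 2H_*$, and substituting into \eqref{c6} yields $\tfrac{\mathrm{d}}{\mathrm{d}\xi}\sigma(\Gamma) = 1/(2H_*)$, whence $\sigma(\Gamma(\xi))=\sigma(\Gamma_0)+\xi/(2H_*)$. Since $\sigma\colon [0,1)\to(-\infty,\sigma_0]$ is a continuous strictly decreasing bijection by \eqref{i3}, $\Gamma$ is decreasing on its interval of existence, attains the value $1$ only asymptotically as $\xi\to-\infty$ (where $\sigma(\Gamma)\to-\infty$), and reaches $0$ at $\xi = 2H_*(\sigma_0-\sigma(\Gamma_0))$, which gives \eqref{uu}.

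For case~(ii), since $H_0\in(0,2H_*)$ and $H'=6(2H_*-H)/(GH^3)>0$ on $(0,2H_*)$, uniqueness confines $H$ to $(0,2H_*)$, so $H$ is strictly increasing on its maximal interval. Because $H<2H_*<3H_*$, the right-hand side of \eqref{c6} is positive while $\sigma'<0$, so $\Gamma'<0$ and $\Gamma$ is strictly decreasing. The orbit equation integrates to $\sigma(\Gamma(H)) = \sigma(\Gamma_0) + \int_{H_0}^H f(s)\,\mathrm{d}s$. On $(0,2H_*)$, $f$ is non-negative, continuous, and $f(s)\sim GH_*/[3(2H_*-s)]$ as $s\to 2H_*^-$, so this integral diverges to $+\infty$ there. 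The constraint $\sigma(\Gamma)\le\sigma_0$ then forces $\Gamma$ to vanish at some $H_\omega\in(H_0,2H_*)$, and $\xi_\omega = \int_{H_0}^{H_\omega} Gs^3/[6(2H_*-s)]\,\mathrm{d}s$ is finite. Going backward, $f$ is integrable at $s=0$, so $\sigma(\Gamma)$ stays bounded as $H\to 0^+$, ruling out $\Gamma\to 1$; and $\int_0^{H_0} Gs^3/[6(2H_*-s)]\,\mathrm{d}s<\infty$ shows that $H$ reaches $0$ in finite backward time. Combined with Lemma~\ref{PL}, this yields $H(-\xi_\alpha) = 0$ and $\Gamma(\xi_\omega) = 0$ with $\xi_\alpha,\xi_\omega<\infty$.

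Case~(iii) is the principal step and the main obstacle. The same uniqueness argument now confines $H$ to $(2H_*,\infty)$ with $H'<0$, so $H$ is strictly decreasing. Here the sign of $\Gamma'$ changes at $H=3H_*$, so no monotonicity of $\Gamma$ is asserted; the analysis rests entirely on the behaviour of $f$ on $(2H_*,\infty)$. One checks that $f$ is negative on $(2H_*,3H_*)$, vanishes at $3H_*$, is positive on $(3H_*,\infty)$, satisfies $f(s)\to-\infty$ as $s\to 2H_*^+$, and grows like $Gs/3$ as $s\to\infty$. As $\xi$ increases, $H$ decreases toward $2H_*$; the non-integrable singularity of $f$ at $2H_*$ makes $\int_{H_0}^H f(s)\,\mathrm{d}s\to+\infty$, which is incompatible with $\sigma(\Gamma)\le\sigma_0$, and forces $\Gamma$ to hit $0$ at some $H^-\in(2H_*,H_0)$. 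Symmetrically, as $\xi$ decreases, $H$ increases, and the non-integrable growth $f(s)\sim Gs/3$ at infinity makes $\int_{H_0}^H f(s)\,\mathrm{d}s\to+\infty$ as $H\to\infty$, so $\Gamma$ again vanishes at some $H^+>H_0$. Since $H^-$ and $H^+$ are bounded away from $2H_*$ and $\infty$ respectively, the travel times $\xi_\omega = \int_{H^-}^{H_0} Gs^3/[6(s-2H_*)]\,\mathrm{d}s$ and $\xi_\alpha = \int_{H_0}^{H^+} Gs^3/[6(s-2H_*)]\,\mathrm{d}s$ are both finite. The crux in this case is precisely that the two divergences of $\int f\,\mathrm{d}s$ occur at both endpoints of the admissible range of $H$, so that $\Gamma$ must leave $(0,1)$ from above (at $0$) before $H$ can escape through either side of $(2H_*,\infty)$.
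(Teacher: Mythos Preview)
Your proof is correct and takes a genuinely different route from the paper's. The paper works throughout in the variable $\xi$: in case~(ii) it bounds $(H^4)'$ and $(\sigma(\Gamma))'$ from below to force $\xi_\alpha<\infty$, then rules out $\lim_{\xi\to-\xi_\alpha}H>0$ by a contradiction argument; in case~(iii) it first shows by contradiction that $H$ must cross the level $3H_*$ at some $\xi_*$, then estimates $\sigma(\Gamma)$ separately on each side of $\xi_*$, using in particular the auxiliary growth bound $H^3\le H_0^3+18|\xi|/G$ to control the backward behaviour. You instead exploit the decoupling of \eqref{c5} and the strict monotonicity of $H$ to pass to the orbit equation $\tfrac{\mathrm{d}}{\mathrm{d}H}\sigma(\Gamma)=f(H)=GH(3H_*-H)/[3(2H_*-H)]$, so that everything reduces to the integrability and sign of the single explicit function $f$. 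This is more unified: in both nontrivial cases the endpoint behaviour of $\Gamma$ is read off directly from the fact that $\int_{H_0}^H f$ diverges to $+\infty$ (never $-\infty$) at the relevant boundary values of $H$, which simultaneously forces $\Gamma\to 0$ and precludes $\Gamma\to 1$; the finiteness of $\xi_\alpha,\xi_\omega$ then drops out of the travel-time integral $\int Gs^3/[6|2H_*-s|]\,\mathrm{d}s$ over a compact $H$-range. The paper's approach, in exchange, avoids justifying the change of independent variable. One cosmetic slip: near $s=2H_*$ the correct asymptotic is $f(s)\sim 2GH_*^2/[3(2H_*-s)]$, not $GH_*/[3(2H_*-s)]$; this does not affect the argument. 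It would also strengthen the write-up of case~(iii) to state explicitly that $H\mapsto\int_{H_0}^H f$ attains its global minimum at $H=3H_*$ with finite value, which is the one-line reason $\sigma(\Gamma)$ stays bounded below and $\Gamma$ never approaches $1$.
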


\begin{proof}
We first note that \eqref{c5} is independent of $\Gamma$ and has an explicit constant solution $H\equiv 2H_*$. Thus, we distinguish three cases relating the initial value $H_0$ to this particular value.

\medskip

\noindent (i) Let $H_0=2H_*$. Clearly, $H\equiv 2H_*$ in $(-\xi_\alpha,\xi_\omega)$ by \eqref{c5}, and we deduce from \eqref{c6} that $\Gamma$ is decreasing with
\begin{equation*}
\sigma(\Gamma(\xi)) = \sigma(\Gamma_0) + \frac{\xi}{2H_*} \ , \qquad \xi\in (-\xi_\alpha,\xi_\omega)\ . 
\end{equation*}
The properties \eqref{i3} of $\sigma$ then entail \eqref{uu}.

\medskip

\noindent (ii) Let $H_0\in (0,2H_*)$. We first observe that \eqref{c5} guarantees that $H$ stays below $2H_*$ so that $H^4$ increases on $(-\xi_\alpha,\xi_\omega)$. Thus $0<H(\xi)<H_0<2H_*$ for $\xi\in (-\xi_\alpha,0)$. It follows from \eqref{c5}, \eqref{c6}, and this upper bound that, for $\xi\in (-\xi_\alpha, 0)$,
$$
(H^4)'\ge \frac{24 (2H_*-H_0)}{G}\ ,\qquad (\sigma(\Gamma))'\ge \frac{2H_*}{H^2} \ge \frac{1}{2 H_*} >0\ ,
$$
from which we deduce that $\Gamma$ is decreasing on $(-\xi_\alpha, 0)$ and
$$
H_0^4 \ge H_0^4 - \lim_{\xi\to -\xi_\alpha} H^4(\xi) \ge \frac{24}{G} \left( 2H_* - H_0 \right) \xi_\alpha\ .
$$
Therefore, $\xi_\alpha$ is finite. Owing to the monotonicity and positivity of $H$, the limit $\ell:=\lim_{\xi\to -\xi_\alpha} H(\xi)$ is a well-defined non-negative real number. Our aim is to show that $\ell=0$. By contradiction assume that $\ell>0$. Then, the definition of $\xi_\alpha$ and the just proven monotonicity of $\Gamma$ imply that $\Gamma(\xi)\to 1$ as $\xi\to -\xi_\alpha$. Consequently, $\sigma(\Gamma(\xi))\to -\infty$ as $\xi\to -\xi_\alpha$. Now, the monotonicity of $H$ guarantees $0<\ell < H(\xi)<H_0$ for $\xi\in (-\xi_\alpha, 0)$ and we infer from \eqref{c6} that
$$
\sigma(\Gamma)'(\xi)\le \frac{6 H_*}{\ell^2}\ .
$$
Integrating this inequality yields
$$
\sigma(\Gamma_0)-\frac{6H_*\xi_\alpha}{\ell^2}\le \sigma(\Gamma_0){+ \frac{6H_*\xi}{\ell^2}}\le \sigma(\Gamma(\xi))
$$
for $\xi\in (-\xi_\alpha,0)$. The above bound  clearly yields a contradiction as it prevents $\sigma(\Gamma(\xi))$ from reaching $-\infty$ as $\xi\to -\xi_\alpha$.
Thus, we have shown $\ell=H(-\xi_\alpha)=0$ and 
\begin{equation*}
H(-\xi_\alpha)=0 < H(\xi) < H_0\ , \qquad \xi\in (-\xi_\alpha,0)\ . 
\end{equation*}
Let us now study the behavior for positive $\xi$. It readily follows from \eqref{c5} that
\begin{equation}
H_0 < H(\xi) < 2 H_*\ , \qquad \xi\in (0,\xi_\omega)\ . \label{c7}
\end{equation}
We then infer from the negativity \eqref{i3} of $\sigma'$, \eqref{c6}, and \eqref{c7} that $\Gamma$ is decreasing in $(0,\xi_\omega)$ and satisfies
$$
\|\sigma'\|_{L_\infty(0,\Gamma_0)}\, \Gamma'(\xi) \le - \frac{1}{2 H_*}\,, \qquad \xi\in (0,\xi_\omega)\ .
$$
As $\Gamma>0$ in $(0,\xi_\omega)$, the previous inequality readily implies that $\xi_\omega<\infty$. Consequently, $\Gamma(\xi_\omega)=0$ by \eqref{c7},
and thus
\begin{equation*}
\xi_\omega < \infty \;\;\text{ and }\;\; \Gamma(\xi_\omega)=0 < \Gamma(\xi) < \Gamma_0\ , \quad \xi\in (0,\xi_\omega)\ . 
\end{equation*} 
Hence, we obtain (ii).

\medskip

\noindent (iii) Let $H_0>2H_*$. In this case, the differential equation  \eqref{c5} guarantees
\begin{equation}
2 H_* < H(\xi) \;\;\;\text{ and }\;\;\; H'(\xi) < 0 \ , \quad \xi\in (-\xi_\alpha,\xi_\omega)\ . \label{c10}
\end{equation}
Also, integrating \eqref{c6} gives
\begin{equation}
\sigma(\Gamma(\xi)) = \sigma(\Gamma_0) + 2 \int_0^\xi \frac{3H_*-H(\zeta)}{H(\zeta)^2}\ \mathrm{d}\zeta\ , \quad \xi\in 
(-\xi_\alpha,\xi_\omega)\ . \label{c101}
\end{equation}
Assume now for contradiction that $H>3H_*$ on $(-\xi_\alpha,\xi_\omega)$. In particular, $H_0>3H_*$ and this implies that $\sigma(\Gamma)$ is decreasing on $(-\xi_\alpha,\xi_\omega)$ according to \eqref{c6}. Then, by \eqref{c10} and \eqref{c101}, for $\xi\in (0,\xi_\omega)$,
$$
2H_*<H(\xi)< H_0
$$
and $$
\sigma(\Gamma_0)\ge \sigma(\Gamma(\xi))\ge \sigma(\Gamma_0)-2(H_0-3H_*)\int_0^\xi\frac{\rd \zeta}{H(\zeta)^2}\ge 
\sigma(\Gamma_0)-\frac{H_0-3H_*}{2H_*^2}\xi\ .
$$
Since $\sigma$ maps $[0,1)$ onto $(-\infty,\sigma_0]$, we deduce from the previous bounds that $\xi_\omega=\infty$.
However, it is clear from \eqref{c5} that then $H(\xi)\to 2H_*$ as $\xi\to\infty$, whence a contradiction. 
Consequently, there is $\xi_*\in (-\xi_\alpha,\xi_\omega)$ such that $H(\xi_*)=3H_*$. Invoking the monotonicity \eqref{c10} of $H$, we have
\begin{equation}
H(\xi)>3 H_*\ , \quad \xi\in (-\xi_\alpha,\xi_*)\ ,\qquad 
H(\xi)<3H_*\ , \quad \xi\in (\xi_*,\xi_\omega)  \ .
\label{z1}
\end{equation}
 We next deduce from \eqref{c101} that
\begin{equation}
\sigma(\Gamma(\xi)) = \sigma(\Gamma(\xi_*)) + 2 \int_{\xi_*}^\xi \frac{3H_*-H(\zeta)}{H(\zeta)^2}\, \mathrm{d}\zeta\ , \quad \xi\in (-\xi_\alpha,\xi_\omega)\ . \label{c101a}
\end{equation}
On the one hand, the properties \eqref{z1} of $H$ and \eqref{c101a} ensure that $\sigma(\Gamma(\xi))\ge \sigma(\Gamma(\xi_*))$  so that $\Gamma(\xi)\le \Gamma(\xi_*)<1$ for $\xi\in (-\xi_\alpha,\xi_\omega)$. Hence, taking $\xi_1\in (\xi_*,\xi_\omega)$, it follows from \eqref{z1}, \eqref{c101a}, and the properties of $\sigma$ that, for $\xi\in (\xi_1,\xi_\omega)$, 
\begin{align*}
\sigma_0 \ge \sigma(\Gamma(\xi)) \ge & \sigma(\Gamma(\xi_*)) + 2 \int_{\xi_1}^\xi \frac{3H_*-H(\zeta)}{H(\zeta)^2}\,\rd\zeta \ge \sigma(\Gamma(\xi_*)) + 2 \int_{\xi_1}^\xi \frac{3H_*-H(\xi_1)}{H(\zeta)^2}\,\rd\zeta \\ 
\ge & \sigma(\Gamma(\xi_*)) + \frac{2}{9}\ \frac{3H_*-H(\xi_1)}{H_*^2}\ (\xi-\xi_1)\ ,
\end{align*}
from which we readily deduce that $\xi_\omega$ is finite. 
We next show that $\xi_\alpha$ is finite. Observe that \eqref{c5} implies 
$$
H^3H'\ge -\frac{6H}{G}\ ,\quad \xi\in (-\xi_\alpha,\xi_\omega)\ ,
$$
so that, since $H$ does not vanish according to Lemma~\ref{PL},
\begin{equation}
H(\xi)^3\le H_0^3+\frac{18}{G}\vert\xi\vert\ ,\quad \xi\in (-\xi_\alpha,0)\ . \label{volvic}
\end{equation}
Now, taking $\xi_2\in (-\xi_\alpha,\min\{0,\xi_*\})$, we infer from \eqref{z1}, \eqref{c101a}, the properties of $\sigma$, and the previous estimate that, for $\xi\in (-\xi_\alpha,\xi_2)$,
\begin{equation*}
\begin{split}
\sigma_0\ge \sigma(\Gamma(\xi)) & \ge \sigma(\Gamma(\xi_*)) + 2 \int_{\xi}^{\xi_2}\frac{H(\zeta)-3 H_*}{H(\zeta)^2}\,\rd \zeta\\
& \ge \sigma(\Gamma(\xi_*)) + 2 G^{2/3} \big(H(\xi_2)-3H_*\big) \int_\xi^{\xi_2} \frac{\rd \zeta}{(G H_0^3 + 18  \vert\zeta\vert)^{2/3}}\ ,
\end{split}
\end{equation*}
from which the finiteness of $\xi_\alpha$ readily follows as the right-hand side of the above inequality diverges as $\xi\to -\infty$. Consequently, since $H$ cannot reach zero by \eqref{c10} and cannot blow up on a finite interval by \eqref{volvic} while $\Gamma$ stays below 1 on $(-\xi_\alpha,\xi_\omega)$ as shown above, we deduce that $\Gamma(\xi)\to 0$ as $\xi\to -\xi_\alpha$ and $\xi\to \xi_\omega$.
\end{proof}

\medskip

Part~(i) of Lemma~\ref{L22} allows us to construct a solution to \eqref{i5}-\eqref{i6} by extending $\Gamma$ by $\Gamma(\xi)=0$ for $\xi\in [\xi_\omega,\infty)$ and using \eqref{c1} to see that $H$ is determined on the interval $(\xi_\omega,\infty)$ as the solution $\hat H$ to
\begin{equation}
\hat H' = \frac{3 \big( H_* - \hat H \big)}{G \hat H^3 }\ , \quad \xi\in (\xi_\omega,\infty)\ , \qquad \hat H(\xi_\omega) = 2 H_*\ . \label{c13}
\end{equation} 
Therefore, after a suitable translation so that $\xi_\omega=0$, a solution $(H,\Gamma)$ to \eqref{i5}-\eqref{i6} in the case $D=0$ and $G>0$ is given by
\begin{equation}
(H,\Gamma)(\xi) =
\left\{
\begin{array}{rcl}
\left(2H_*,\sigma^{-1}\left(\sigma_0 + \dfrac{\xi}{2H^*}\right) \right)\,, &  \, \xi <0\,,\\[16pt]
\left(\hat H(\xi), 0 \right)\,, &  \, \xi \geq0\,.
\end{array}
\right. \label{c13a}
\end{equation}
Note that a particular consequence of \eqref{c13} and \eqref{c13a} is that $H(\xi)\to H_*$  as $\xi\to \infty$. Finally, $(H,\Gamma)$ has the expected regularity and the pair $(h,\gamma)$ obtained subsequently from $(H,\Gamma)$ through \eqref{i4} is a weak solution to \eqref{i1}-\eqref{i2} on $\R$ by Lemma~\ref{TWws}. This yields Theorem~\ref{T1} and Proposition~\ref{P222} when $G>0$ and $D=0$.

\begin{remark}\label{RR1}
{\rm Since system \eqref{c5}-\eqref{c6} is autonomous, it is a priori not excluded to concatenate different types of solutions constructed in Lemma~\ref{L22} and solutions to \eqref{c13}.  However, due to the fact that traveling wave solutions in the case $G>0,D=0,$ have to be continuous on $\mathbb{R}$, it follows from Lemma~\ref{L22} that solutions with $H_0\ne 2H_*$ cannot be concatenated to a physically relevant traveling wave.}
\end{remark}

\subsection{With surface diffusion $D>0$} \label{sec_32}

We now take both gravity and surface diffusion into account and again transform \eqref{i5}-\eqref{i6} into a system of differential equations for $(H,\Gamma)$. For that purpose, we multiply \eqref{i5} by $12(\Gamma \sigma'(\Gamma) H - D)$ and \eqref{i6} by {$-6 \sigma'(\Gamma) H^2$} and add the resulting identities to obtain
\begin{equation*}
G H^3 \left( \Gamma \sigma'(\Gamma) H - 4 D \right) H' = 6 \Gamma \sigma'(\Gamma) H \left( 2 H_*-H \right) + 12 D \left( H - H_* \right)\ , \qquad \xi\in\RR\ . 
\end{equation*}
Similarly, we multiply \eqref{i5} by $6\Gamma$ and \eqref{i6} by $-4H$ and find, after adding the resulting identities,
\begin{equation*}
H \left( \Gamma \sigma'(\Gamma) H - 4 D \right) \Gamma' = 2 \Gamma \left( 3H_* - H \right)\ , \qquad \xi\in\RR\ . 
\end{equation*}
Thus, $(H,\Gamma)$ solves the following differential system:
\begin{align*}
G H^3 \big( \Gamma \sigma'(\Gamma) H - 4 D \big) H' & =  6 \Gamma \sigma'(\Gamma) H \left( 2 H_*-H \right) + 12  D \left( H - H_* \right)\ , & \xi\in\RR\ , \\ 
H \big( \Gamma \sigma'(\Gamma) H - 4 D \big) \Gamma' & = 2 \Gamma \left( 3H_* - H \right)\ , & \xi\in\RR\ . 
\end{align*}
Since $\sigma'<0$ by \eqref{i3}, we note that 
\begin{equation}
\Gamma \sigma'(\Gamma) H - 4 D<0\ , \qquad \xi\in\RR\ .  \label{d3}
\end{equation}
Thus, we shall construct a traveling wave solution for (\ref{i1})-(\ref{i2}) by proving the existence of a heteroclinic orbit for the system (recall that $\varrho=-1/\sigma'$ and $\varrho(1)=0$)
\begin{align}
 H' & = \frac{ 6 \Gamma H \left( 2 H_*-H \right) - 12 D \left( H - H_* \right)\varrho(\Gamma)}{{G H^3} \big( \Gamma  H + 4 D \varrho(\Gamma) \big)}\ , & \xi\in\RR\ ,  \label{app_d1} \\
 \Gamma' & =  \frac{2 \Gamma \left( H-3H_*\right)\varrho(\Gamma)}{H \big( \Gamma  H + 4 D \varrho(\Gamma) \big)}\ , & \xi\in\RR\ , \label{app_d2}
\end{align}
which connects the critical points $(2H_*,1)$ and $(H_*,0)$. In order to do so, set
\begin{align}
f_1(\tilde{H},\tilde{\Gamma}) & := \frac{ 6 \tilde{\Gamma} \tilde{H} ( 2 H_*-\tilde{H} ) - 12 D ( \tilde{H} - H_* )\varrho(\tilde\Gamma)}{{G \tilde{H}^3} ( \tilde{\Gamma}  \tilde{H} + 4 D \varrho(\tilde{\Gamma}) )}, \label{f_d1} \\
 f_2(\tilde{H},\tilde{\Gamma}) & :=  \frac{2 \tilde{\Gamma} ( \tilde{H}-3H_*)\varrho(\tilde{\Gamma})}{\tilde{H} 
(\tilde{\Gamma}  \tilde{H} + 4 D \varrho(\tilde{\Gamma}) )}, \label{f_d2}
\end{align}
and note that, by \eqref{assumrho},
$$
(f_1,f_2)\in \mathrm{Lip}((0,\infty)\times (0,1) , \mathbb{R}^2).
$$
Thus, given $(H_0,\Gamma_0)\in (0,\infty)\times (0,1)$, the classical Cauchy-Lipschitz theorem ensures the existence of a unique non-extendable  solution 
$$
(H,\Gamma)\in C^1((-\xi_\alpha,\xi_\omega) , (0,\infty)\times (0,1))
$$
of \eqref{app_d1}-\eqref{app_d2} complemented with the initial condition
\begin{equation}\label{app_d3}
(H,\Gamma)(0) = (H_0,\Gamma_0)\ ,
\end{equation}
and global existence being characterized as in Lemma~\ref{PL}.

\begin{proposition}[Blow-up scenario] \label{prop_app1}
Given $(H_0,\Gamma_0) \in [H_*,2H_*] \times (0,1)$, let 
$$
(H,\Gamma)\in C^1((-\xi_\alpha,\xi_\omega) , \mathbb{R}^2)
$$
be the non-extendable solution to \eqref{app_d1}-\eqref{app_d3}. Then the following assertions hold true:

\begin{itemize}

\item [(a)] $\xi_{\omega} = \infty$ and 
\begin{eqnarray*}
(H(\xi),\Gamma(\xi)) \in (H_*,2H_*) \times (0,1)\ , \quad \xi \in (0,\infty)\,.
\end{eqnarray*}
Moreover, $\Gamma$ is decreasing on $(-\xi_\alpha,\infty)$.

\item [(b)] If $\xi_{\alpha}$ is finite, then there exists $\tilde{\xi}_{\alpha} < \xi_{\alpha}$ such that $H(-\tilde{\xi}_{\alpha}) \in \{H_*,2H_*\}.$

\end{itemize}
\end{proposition}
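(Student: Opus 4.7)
The proof divides into two parts: forward invariance with global existence (a), and the backward blow-up dichotomy (b). In both parts the critical tool is a uniform bound on the derivative of $\sigma \circ \Gamma$, which exploits the cancellation $\sigma'(\Gamma)\varrho(\Gamma) = -1$ to neutralize the singularity of the vector field near $\Gamma = 1$.

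For part (a), I will first show that the open rectangle $R := (H_*, 2H_*) \times (0,1)$ is forward invariant. On the side $\{H = H_*\}$, the numerator of $f_1$ reduces to $6\Gamma H_*^2 > 0$, and on $\{H = 2H_*\}$ it becomes $-12 D H_* \varrho(\Gamma) < 0$ (using $D > 0$ and $\varrho > 0$ on $[0,1)$), so the flow points strictly into $R$ on both sides. The line $\{\Gamma = 0\}$ is an invariant set for the extended vector field (since $f_2 = 0$ there and $f_2$ is Lipschitz in $\Gamma$ up to $0$), so by local uniqueness $\Gamma$ remains strictly positive forward. For the more delicate edge $\{\Gamma = 1\}$ the crucial identity is
\begin{equation*}
(\sigma \circ \Gamma)' \;=\; -\frac{\Gamma'}{\varrho(\Gamma)} \;=\; \frac{2\Gamma(3H_* - H)}{H(\Gamma H + 4D\varrho(\Gamma))},
\end{equation*}
which is uniformly bounded on $R$ thanks to the $\varrho$ cancellation. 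Since $\sigma(z) \to -\infty$ as $z \to 1$ (a consequence of $\sigma' \notin L_1(0,1)$), integrating this bound shows that $\Gamma$ cannot reach $1$ in finite forward time. Thus $(H, \Gamma)$ stays in a compact subset of $(0, \infty) \times (0,1)$ on every finite forward interval, and the blow-up criterion analogous to Lemma~\ref{PL} yields $\xi_\omega = \infty$. Monotonicity of $\Gamma$ on $(0, \infty)$ is then immediate from the formula for $\Gamma'$: the factor $H - 3H_*$ is strictly negative since $H \in (H_*, 2H_*)$.

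For part (b), if $H_0 \in \{H_*, 2H_*\}$ the conclusion is trivial with $\tilde\xi_\alpha = 0$, so assume $(H_0, \Gamma_0) \in (H_*, 2H_*) \times (0,1)$ and $\xi_\alpha < \infty$. I argue by contradiction: suppose $H(\xi) \in (H_*, 2H_*)$ for every $\xi \in (-\xi_\alpha, 0]$. Then $H < 3H_*$ on this interval, whence $\Gamma'(\xi) < 0$ and $\Gamma$ is strictly increasing backward. The same bound on $(\sigma \circ \Gamma)'$ gives $|\sigma(\Gamma(\xi)) - \sigma(\Gamma_0)| \leq C|\xi|$ on $(-\xi_\alpha, 0]$, preventing $\Gamma$ from reaching $1$ in finite backward time. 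Combined with $\Gamma(\xi) \geq \Gamma_0 > 0$ backward, the orbit lies in a compact subset of $(0, \infty) \times (0,1)$ on $(-\xi_\alpha, 0]$, where the vector field is bounded. This contradicts $\xi_\alpha < \infty$. Hence $H$ must leave the open strip $(H_*, 2H_*)$ somewhere in $(-\xi_\alpha, 0)$, and by continuity it hits $H_*$ or $2H_*$ at some $-\tilde\xi_\alpha \in (-\xi_\alpha, 0)$.

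The main technical obstacle is the behavior near $\Gamma = 1$, where $\varrho$ vanishes and the local Lipschitz hypothesis \eqref{assumrho} does not extend up to this boundary. Passing to the variable $\sigma \circ \Gamma$ effectively linearizes the singularity: the bound on $(\sigma \circ \Gamma)'$ is independent of how close $\Gamma$ is to $1$, and the divergence of $\sigma$ at $1$ then forces infinite crossing time. This single estimate drives both the forward global existence in (a) and the exclusion of a silent backward blow-up in (b).
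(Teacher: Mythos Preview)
Your proof is correct and follows the same approach as the paper: the crucial device in both is the uniform bound on $(\sigma\circ\Gamma)'$ over the rectangle (the paper's constant is $4/\mu$ with $\mu$ from \eqref{K_-}), which prevents $\Gamma$ from reaching $1$ in finite time and drives both (a) and (b). The only cosmetic differences are that the paper rules out $\Gamma\to 0$ forward via a Gronwall bound $\Gamma'\ge -C\Gamma$ rather than your uniqueness argument on the invariant line $\{\Gamma=0\}$, and that you state the monotonicity of $\Gamma$ only on $(0,\infty)$ whereas the paper asserts it on $(-\xi_\alpha,\infty)$---though the paper's own justification likewise relies on $H\in[H_*,2H_*]$, which for $\xi<0$ is only secured under the contradiction hypothesis of~(b).
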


\begin{proof}
(a) Let $(H_0,\Gamma_0)$ and $(H,\Gamma)$ be as above. Since
\begin{equation} \label{eq_f1}
f_1(H_*,\tilde{\Gamma}) > 0\,, \quad f_1(2H_*,\tilde{\Gamma}) < 0 \ , \quad\tilde{\Gamma} \in (0,1)\,,
\end{equation}
we conclude 
$$
H(\xi) \in (H_*,2H_*) \ , \quad\xi \in (0,\xi_{\omega})\ .
$$ 
We also have
$$
f_2(\tilde{H},\tilde{\Gamma}) < 0 \ , \quad (\tilde{H},\tilde{\Gamma}) \in [H_*,2H_*] \times (0,1)\ .
$$
Hence, $\Gamma$ is decreasing on $(-\xi_\alpha,\xi_{\omega})$. Finally, since $\varrho\ge 0$ vanishes only at $\tilde{\Gamma}=1$ there exists $\mu>0$ such that
\begin{equation}\label{K_-}
 0< \mu \le \tilde{\Gamma} \tilde{H} + 4D \varrho(\tilde{\Gamma})  \quad \text{for} \quad (\tilde{H},\tilde{\Gamma}) \in (H_*,2H_*) \times (0,1)\,,
\end{equation}
and we conclude that
$$
\Gamma'(\xi) \geq - \frac{4 R_\varrho}{\mu} \Gamma(\xi) \ , \quad\xi \in (0,\xi_{\omega})\,.
$$
Consequently,
$$
\Gamma(\xi) \ge \Gamma_0 \ e^{- 4 R_\varrho \xi/\mu}  \ , \quad\xi \in (0,\xi_{\omega})\ ,
$$
and $\Gamma$ thus cannot reach zero for $\xi$ finite, which rules out the possibility that $\xi_\omega$ is finite. This proves~(a). 

\medskip

\noindent (b) Assume that $\xi_\alpha <\infty.$  Assume also for contradiction that 
\begin{equation}
H_*<H(\xi)<2H_* \ , \quad \xi\in(-\xi_\alpha, 0),\label{luchon}
\end{equation}
and recall that $\Gamma$ is then decreasing on $(-\xi_\alpha,\infty)$ so that
\begin{equation}
\Gamma(\xi)\ge \Gamma_0 \ , \quad \xi\in(-\xi_\alpha, 0)\ . \label{evian}
\end{equation}
On the one hand, owing to \eqref{evian} and \eqref{luchon}, the finiteness of $-\xi_\alpha$ implies that 
$$
\lim_{\xi\to -\xi_\alpha} \Gamma(\xi) = 1\ . 
$$
On the other hand, recalling that $\varrho=-1/\sigma'$, we infer from \eqref{app_d2}, \eqref{K_-}, and \eqref{luchon} that
$$
\dfrac{\rd}{\rd\xi}\sigma(\Gamma)
= - \frac{2 \Gamma \left( H-3H_*\right)}{H \big( \Gamma  H + 4 D \varrho(\Gamma) \big)} \le \frac{4 H_* \Gamma}{H \big( \Gamma  H + 4 D \varrho(\Gamma) \big)} \le \dfrac{4}{\mu}\quad \text{in}\quad (-\xi_{\alpha},0).
$$ 
Integration over $(-\xi_\alpha,0)$ and application of the decreasing function $\sigma^{-1}$ then yield
$$
\Gamma(\xi) \leq \sigma^{-1} \left( \sigma(\Gamma_0) - \dfrac{4\xi_{\alpha} }{\mu} \right) < 1  \ , \quad \xi \in (-\xi_{\alpha},0)\ .
$$ 
Therefore, $\Gamma$ cannot reach the value $1$ as $\xi\to- \xi_\alpha$ and we obtain a contradiction. 
\end{proof}

We next analyze the behavior of $(H,\Gamma)$ as $\xi$ tends to $\infty$.

\begin{proposition}[The $\omega$-limit set] \label{prop_step2} Let $(H_0,\Gamma_0)$ and $(H,\Gamma)$ be as in Proposition~\ref{prop_app1}.
Then 
$$
\lim_{\xi \to  \infty} \big(H(\xi),\Gamma(\xi)\big) = (H_*,0)\ . 
$$
\end{proposition}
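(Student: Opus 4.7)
My plan is to first prove $\Gamma(\xi)\to 0$ via the monotonicity of $\Gamma$ from Proposition~\ref{prop_app1}(a), and then deduce $H(\xi)\to H_*$ from an $\omega$-limit set analysis on the invariant line $\{\Gamma=0\}$. Concretely, Proposition~\ref{prop_app1}(a) yields that $\Gamma$ is decreasing on $(-\xi_\alpha,\infty)$ and bounded below by $0$, so $\Gamma(\xi)\to\Gamma_\infty\in[0,\Gamma_0)$ as $\xi\to\infty$. I would rule out $\Gamma_\infty>0$ by contradiction: in that case $\Gamma(\xi)\in[\Gamma_\infty,\Gamma_0]\subset(0,1)$ for $\xi\ge 0$, so the continuity of $\varrho$ on $[0,1]$ together with the fact that $\varrho$ vanishes only at $1$ gives $\varrho(\Gamma(\xi))\ge m:=\min_{[\Gamma_\infty,\Gamma_0]}\varrho>0$. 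Combining this with $H\in(H_*,2H_*)$, $H-3H_*\le -H_*$, and $\Gamma H+4D\varrho(\Gamma)\le 2H_*+4DR_\varrho$, formula \eqref{app_d2} yields $\Gamma'(\xi)\le -\delta$ for some $\delta>0$ and all $\xi\ge 0$, contradicting $\Gamma(\xi)\ge\Gamma_\infty$.

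Since $\varrho(0)=-1/\sigma'(0)>0$, the denominator $\tilde\Gamma\tilde H+4D\varrho(\tilde\Gamma)$ is strictly positive on $(0,\infty)\times[0,1)$; together with \eqref{assumrho}, this means that $(f_1,f_2)$ extends locally Lipschitz continuously up to the boundary $\{\tilde\Gamma=0\}$ and generates a well-defined local flow there. The forward orbit $\{(H(\xi),\Gamma(\xi)):\xi\ge 0\}$ lies in the compact set $[H_*,2H_*]\times[0,\Gamma_0]$, hence its $\omega$-limit set $\omega^+$ is non-empty, compact, connected, and invariant under this extended flow; combined with the first step we obtain $\omega^+\subset[H_*,2H_*]\times\{0\}$.

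On the invariant line $\{\Gamma=0\}$ the system reduces to $\Gamma'=0$ and $H'=-3(H-H_*)/(GH^3)$, whose only equilibrium is $(H_*,0)$. For any candidate point $(H_\infty,0)\in\omega^+$ with $H_\infty\in(H_*,2H_*]$, one has $H'<0$ there, so arbitrarily short backward time shifts produce trajectory points with $H$-coordinate strictly larger than $2H_*$; by invariance these points would belong to $\omega^+\subset[H_*,2H_*]\times\{0\}$, a contradiction. Hence $\omega^+=\{(H_*,0)\}$, and the precompactness of the forward orbit yields $(H(\xi),\Gamma(\xi))\to(H_*,0)$ as $\xi\to\infty$. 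The main delicate step of this plan is verifying the Lipschitz extension of $(f_1,f_2)$ up to $\Gamma=0$, which relies crucially on $\varrho(0)>0$, i.e.\ the non-singularity of the surface tension at zero surfactant concentration.
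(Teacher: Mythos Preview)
Your approach is correct and genuinely different from the paper's. The paper proceeds by deriving explicit exponential differential inequalities: first it sandwiches $-\Gamma'/\Gamma$ between two positive constants to obtain
\[
e^{-\xi/D}\ \le\ \frac{\Gamma(\xi)}{\Gamma_0}\ \le\ e^{-\delta_0\xi}\,,\qquad \xi\ge 0\,,
\]
and then it feeds this into linear differential inequalities for $H-H_*$ (extracted from \eqref{app_d1}) to obtain $C_1 e^{-\delta_1\xi}\le H(\xi)-H_*\le C_2 e^{-\delta_2\xi}$. So the paper's proof gives not only the limit but explicit exponential rates. Your argument is purely qualitative: monotonicity plus a contradiction to get $\Gamma\to 0$, and then $\omega$-limit set theory applied to the flow extended to the boundary $\{\tilde\Gamma=0\}$ (which is indeed legitimate, since $\varrho(0)>0$ and \eqref{assumrho} keep the denominator bounded away from zero and $(f_1,f_2)$ locally Lipschitz on $[H_*,2H_*]\times[0,\Gamma_0]$). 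This is more conceptual and arguably cleaner, at the cost of losing the rate information.

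One slip to fix: the sentence ``arbitrarily short backward time shifts produce trajectory points with $H$-coordinate strictly larger than $2H_*$'' is false for $H_\infty\in(H_*,2H_*)$; a short backward shift from $(H_\infty,0)$ only gives $H$ slightly above $H_\infty$, not above $2H_*$. The repair is immediate. Either argue that the \emph{full} backward orbit through $(H_\infty,0)$ eventually crosses $2H_*$ (on $\{\Gamma=0\}$ the equation $H'=-3(H-H_*)/(GH^3)$ has $H$ strictly increasing backward in time with no equilibrium above $H_*$), which by invariance of $\omega^+$ contradicts $\omega^+\subset[H_*,2H_*]\times\{0\}$; or, more directly, set $H_\infty:=\max\{H:(H,0)\in\omega^+\}$ (which exists by compactness), note $H'<0$ there if $H_\infty>H_*$, and observe that any backward shift then yields a point of $\omega^+$ with $H$-coordinate strictly exceeding $H_\infty$, contradicting maximality.
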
 

\begin{proof}
Let $(H_0,\Gamma_0)$ and $(H,\Gamma)$ be as in Proposition~\ref{prop_app1}. We recall that $H(\xi) \leq 2H_*$ for all $\xi \in (0,\infty)$ and  that $\Gamma$ is decreasing on $(0,\infty).$ Given $\xi\ge 0$, it follows from these properties and \eqref{app_d2} that
$$
-\Gamma' \le  \frac{2 \Gamma \left( 3H_*-H\right)\varrho(\Gamma)}{4D H \varrho(\Gamma) }\le \frac{\Gamma}{D}
$$
and
$$
-\Gamma' \ge  \frac{2 \Gamma H_*}{ H \big(4D- \Gamma H\sigma'(\Gamma)\big)}\ge \frac{\Gamma }{ 4D +2 \Gamma_0 H_*\|\sigma'\|_{L_\infty(0,\Gamma_0)}}\ .
$$
Consequently,
\begin{equation}
e^{-\xi/D} \le \frac{\Gamma(\xi)}{\Gamma_0} \le  e^{-\delta_0 \xi} \ , \quad \xi\in [0,\infty)\ , \;\;\;\text{ with }\;\;\; \delta_0 := \frac{1}{4D+2\Gamma_0 H_* \|\sigma'\|_{L_\infty(0,\Gamma_0)} }>0\ . \label{vittel}
\end{equation}
We next turn to $H$ and deduce from \eqref{app_d1} and Proposition~\ref{prop_app1}~(a) that, for $ \xi\in (0,\infty)$,
$$
H' + \frac{3 (H-H_*)}{G H_*^3} \ge 0
$$
and
$$
H' + \frac{3 D}{2G H_*^3}\ \frac{\min_{[0,\Gamma_0]}{\{\varrho\}}}{2 H_* + 4 D R_\varrho}\ (H-H_*) \le \frac{3\Gamma}{G H_* D \min_{[0,\Gamma_0]}{\{\varrho\}}}\ .
$$
Combining the previous differential inequalities with \eqref{vittel} 
ensure that there are positive constants $C_1$, $C_2$, $\delta_1$, $\delta_2$ depending on $G$, $H_*$, $D$, $\sigma'$, $H_0$, and $\Gamma_0$ such that
\begin{equation*}
C_1\ e^{-\delta_1 \xi} \le H(\xi)-H_* \le C_2\ e^{-\delta_2 \xi}\,, \qquad \xi\in (0,\infty)\ .
\end{equation*}
Therefore, $(H,\Gamma)(\xi)\to (H_*,0)$ as $\xi\to\infty$.
\end{proof}

\medskip

Our next goal is to show the existence of a global solution to \eqref{app_d1}-\eqref{app_d3}, that is, a solution with $\xi_\alpha=\infty$.  To this end several steps are needed. We begin with the behavior as $\xi\to -\infty$ of global solutions (if any) to check that they have the expected limit. 

\begin{proposition}\label{prglobal}
Assume that $(H,\Gamma)$ is a global solution (that is, $\xi_\omega=\xi_\alpha=\infty$) to \eqref{app_d1}-\eqref{app_d2} such that $(H,\Gamma)(\xi) \in (H_*,2H_*) \times (0,1)$ for all $\xi\in\RR.$
Then
$$
\lim_{\xi \to -\infty} \big(H(\xi),\Gamma(\xi)\big) = (2H_*,1)\ .
$$
\end{proposition}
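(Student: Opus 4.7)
The plan is to proceed in three steps: first show that $\Gamma(\xi)$ has a limit as $\xi\to-\infty$, then identify this limit as $1$, and finally deduce $H(\xi)\to 2H_*$. Since $(H,\Gamma)(\xi)\in(H_*,2H_*)\times(0,1)$ throughout $\RR$, the sign analysis of $f_2$ performed in the proof of Proposition~\ref{prop_app1}(a) still applies: $\Gamma$ is strictly decreasing on $\RR$, so $\Gamma(\xi)$ tends to some limit $\Gamma_{-\infty}\in[\Gamma(0),1]$ as $\xi\to-\infty$. To rule out $\Gamma_{-\infty}<1$, I argue by contradiction: in that case the continuity of $\varrho$ on $[0,1)$ yields a uniform lower bound $\varrho(\Gamma(\xi))\geq\varrho_0>0$ on $(-\infty,0]$, while the denominator in \eqref{app_d2} stays bounded above on the compact range of $(H,\Gamma)$. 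Since $|H-3H_*|\geq H_*$, this produces $-\Gamma'(\xi)\geq\kappa>0$ for all $\xi\leq 0$ and some $\kappa>0$, which integrated from $\xi$ to $0$ contradicts $\Gamma(\xi)\in(0,1)$. Hence $\Gamma_{-\infty}=1$.

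Next, set $\bar H:=\limsup_{\xi\to-\infty}H(\xi)$ and $\underline H:=\liminf_{\xi\to-\infty}H(\xi)$; both lie in $[H_*,2H_*]$. I first show $\bar H=2H_*$. Suppose, for contradiction, that $\bar H\leq 2H_*-\e$ for some $\e>0$, so that $H(\xi)\leq 2H_*-\e$ for all $\xi$ sufficiently negative. Because $\varrho$ is continuous with $\varrho(1)=0$ and $\Gamma(\xi)\to 1$, the negative contribution $-12D(H-H_*)\varrho(\Gamma)$ in the numerator of \eqref{app_d1} becomes negligible compared with the positive contribution $6\Gamma H(2H_*-H)\geq 3H_*\e$ as $\xi\to-\infty$. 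Since the denominator in \eqref{app_d1} is bounded above by $G(2H_*)^3(2H_*+4DR_\varrho)$, this produces a uniform lower bound $H'(\xi)\geq c>0$ on some tail $(-\infty,\xi_*]$, contradicting the boundedness of $H$.

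It remains to upgrade $\bar H=2H_*$ to $\underline H=2H_*$. If $\underline H\leq 2H_*-\e$ for some $\e>0$, I extract two interlaced sequences $\xi_{n+1}<\tilde\xi_n<\xi_n$ converging to $-\infty$ with $H(\xi_n)\geq 2H_*-\e/4$ (using $\bar H=2H_*$) and $H(\tilde\xi_n)\leq 2H_*-3\e/4$. Along $[\xi_{n+1},\tilde\xi_n]$ the continuous function $H$ must descend through the level $2H_*-\e/2$; at the first such crossing point $\zeta_n$ one has $H(\zeta_n)=2H_*-\e/2$ and $H'(\zeta_n)\leq 0$. However, at $\zeta_n$ the estimate used to prove $\bar H=2H_*$, now with the fixed positive defect $2H_*-H(\zeta_n)=\e/2$, yields $H'(\zeta_n)>0$ for $n$ large, giving the desired contradiction. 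The main obstacle is precisely this last step: natural monotonicity of $H$ is unavailable (unlike for $\Gamma$), so the conclusion must come from a crossing argument that trades the vanishing of $\varrho(\Gamma)$ at $\Gamma=1$ against a fixed positive defect of $H$ from $2H_*$. The explicit form of \eqref{app_d1} together with $\varrho(1)=0$ makes this comparison elementary, provided the thresholds are chosen carefully.
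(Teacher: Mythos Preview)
Your proof is correct but follows a different strategy from the paper's. For $\Gamma$, the paper integrates the inequality \eqref{eq_Gammap} to show $\varrho(\Gamma)\in L_1(-\infty,0)$, whence $\varrho(\Gamma_\alpha)=0$ and thus $\Gamma_\alpha=1$; you instead argue directly that $\Gamma_{-\infty}<1$ would force a uniform positive lower bound on $-\Gamma'$, which is equally efficient. The real divergence is in the treatment of $H$: the paper derives the integrability statements $2H_*-H\in L_1(-\infty,0)$ and $H'\in L_1(-\infty,0)$ (using \eqref{eq_IGP} and the differential inequality), and these together force the limit $2H_*$. Your limsup/liminf crossing argument avoids integrability altogether: once $\varrho(\Gamma(\xi))\to 0$, any fixed defect $2H_*-H(\zeta_n)=\e/2$ makes the numerator of \eqref{app_d1} strictly positive at $\zeta_n$, contradicting $H'(\zeta_n)\le 0$. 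This is more elementary and targets the limit statement directly, whereas the paper's route yields the additional quantitative information $2H_*-H,\,H'\in L_1(-\infty,0)$, which could be of independent use. Both approaches hinge on the same structural feature, namely that the $\varrho$-term in \eqref{app_d1} vanishes as $\Gamma\to 1$ while the $\Gamma H(2H_*-H)$-term does not.
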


\begin{proof}
Since $H(\xi) \in (H_*,2H_*)$ for all $\xi\in\mathbb{R}$, it follows from \eqref{rho1}, \eqref{app_d2}, and the non-negativity of $\varrho$ that 
\begin{equation} \label{eq_Gammap}
\Gamma' \le  - \frac{2 \Gamma H_* \varrho(\Gamma)}{H ( \Gamma H + 4 D \varrho(\Gamma))} \le - \frac{\Gamma \varrho(\Gamma)}{2 H_* + 4 D R_\varrho} \le 0\,, \quad \xi \in \RR\,.
\end{equation}
Thus $\Gamma$ is a decreasing and bounded function on $\mathbb{R}$ ranging in $(0,1)$ and there is $\Gamma_\alpha\in [0,1]$ such that $\Gamma(\xi) \to \Gamma_\alpha$ as $\xi\to-\infty.$ Furthermore, integrating \eqref{eq_Gammap} over $(\xi,0)$ with $\xi<0$ and using \eqref{rho1} and the bounds $0<\Gamma<1$ give
$$
\frac{1}{2H_* + 4D R_\varrho} \int_\xi^0 \Gamma(\zeta) \varrho(\Gamma(\zeta))\ \mathrm{d}\zeta \le \Gamma(\xi) \le 1\ , \quad \xi\in (-\infty,0)\ .
$$
Since both $\Gamma$ and $\varrho(\Gamma)$ are non-negative, the above inequality and \eqref{eq_Gammap} entail that 
\begin{equation}
0 \le \Gamma_0 \int_{-\infty}^0 \varrho(\Gamma(\zeta))\ \mathrm{d}\zeta \le \int_{-\infty}^0 \Gamma(\zeta) \varrho(\Gamma(\zeta))\ \mathrm{d}\zeta \le 2H_* + 4D R_\varrho\ . \label{eq_IGP}
\end{equation}
Collecting the above information and using the continuity \eqref{rho1} of $\varrho$ we realize that $\varrho(\Gamma)$ belongs to $L_1(-\infty,0)$ with $\varrho(\Gamma(\xi))\to \varrho(\Gamma_\alpha)$ as $\xi\to -\infty$ and thus $\varrho(\Gamma_\alpha)=0$. Since $\varrho$ only vanishes once in $[0,1]$ by \eqref{rho1} we conclude that $\Gamma_\alpha=1$, that is,  
\begin{equation*} 
\lim_{\xi\to -\infty} \Gamma(\xi) = 1\ . 
\end{equation*}
Next, since $H(\xi) \in (H_*,2H_*)$ for all $\xi\in\mathbb{R}$, we infer from \eqref{rho1} and \eqref{K_-} that, for $\xi<0$, 
\begin{equation}
\delta_3 := \frac{3 \Gamma_0}{4 G H_*^2 (2H_* + 4D R_\varrho)} \le \frac{6 \Gamma H}{G H^3 \big( \Gamma H + 4D \varrho(\Gamma) \big)} \le \frac{12}{\mu G H_*^2} \label{chopin}
\end{equation}
and
\begin{equation}
0 \le \frac{12 D (H-H_*)}{G H^3 \big( \Gamma H + 4D \varrho(\Gamma) \big)} \le \frac{12D}{\mu G H_*^2}\ . \label{beethoven}
\end{equation}
Therefore, by \eqref{app_d1},
$$
0 \le \delta_3 (2H_*-H) \le \frac{6 \Gamma H (2H_*-H)}{G H^3 \big( \Gamma H + 4D \varrho(\Gamma) \big)} = H' +  \frac{12 D (H-H_*) \varrho(\Gamma)}{G H^3 \big( \Gamma H + 4D \varrho(\Gamma) \big)} \le H' + \frac{12D}{\mu G H_*^2} \varrho(\Gamma)
$$
for $\xi<0$. Integrating the above inequality over $(\xi,0)$ with $\xi<0$ gives
$$
0 \le \delta_3 \int_\xi^0 (2H_*-H(\zeta))\ \mathrm{d}\zeta \le H_0 - H(\xi) + \frac{12D}{\mu G H_*^2} \int_\xi^0 \varrho(\Gamma(\zeta))\ \mathrm{d}\zeta\ , 
$$
which, together with \eqref{eq_IGP}, implies that
\begin{equation}
2H_* - H \in L_1(-\infty,0)\ . \label{debussy}
\end{equation}
Moreover, using again \eqref{app_d1}, \eqref{chopin}, and \eqref{beethoven}, we find
$$
|H'| \le \frac{12}{\mu G H_*^2} \left[  (2H_*-H) + D \varrho(\Gamma) \right]\ ,\quad \xi \in (-\infty,0)\ ,
$$
and deduce from \eqref{eq_IGP} and \eqref{debussy} that
\begin{equation}
H' \in L_1(-\infty,0)\ . \label{ravel}
\end{equation}
Combining \eqref{debussy} and \eqref{ravel} entails $H(\xi)\to 2H_*$ as $\xi\to -\infty$ and completes the proof.
\end{proof}

\medskip

The next step is a more precise study of the phase plane associated to \eqref{app_d1}-\eqref{app_d2} which requires a refined analysis of $f_1$ defined in \eqref{f_d1}. Actually, $f_1$ also reads
$$
f_1(\tilde{H},\tilde{\Gamma}) = \frac{12D \tilde{\Gamma} (\tilde{H}-H_*)}{G \tilde{H}^3 ( \tilde{\Gamma}  \tilde{H} + 4 D \varrho(\tilde{\Gamma}) )} \left[  \frac{\tilde{H} (2 H_*-\tilde{H})}{2D (\tilde{H}-H_*)} - g(\tilde{\Gamma}) \right]\ , \quad (\tilde{H},\tilde{\Gamma})\in (H_*,2H_*)\times (0,1)\ ,
$$ 
with 
$$
g(\tilde{\Gamma}) := \frac{\varrho(\tilde{\Gamma})}{\tilde{\Gamma}}\ , \quad \tilde{\Gamma}\in (0,1)\ ,
$$
so that
\begin{equation}
f_1(\tilde{H},\tilde{\Gamma}) = 0 \quad \text{ if and only if } \quad \phi(\tilde{H}) := \frac{\tilde{H} (2 H_*-\tilde{H})}{2D (\tilde{H}-H_*)} = g(\tilde{\Gamma})\ . \label{eq_Gammat}
\end{equation}
In addition, the monotonicity of $\phi$ on $(H_*,2H_*)$ together with \eqref{eq_f1} ensure that, given $\tilde{\Gamma}\in (0,1)$, there is a unique $H_c(\tilde{\Gamma})\in (H_*,2H_*)$ such that
\begin{equation}
\begin{split}
f_1(\tilde{H},\tilde{\Gamma}) < & 0 \quad \text{ for } \quad \tilde{H} \in \big( H_c(\tilde{\Gamma}) , 2H_* \big)\ , \\
f_1(\tilde{H},\tilde{\Gamma}) > & 0 \quad \text{ for } \quad \tilde{H} \in \big( H_*, H_c(\tilde{\Gamma}) \big)\ ,
\end{split} \label{mozart}
\end{equation}
and $H_c\in C((0,1),(H_*,2H_*))$ satisfies $H_c(\tilde{\Gamma})\to 2H_*$ as $\tilde{\Gamma}\to 1$ and $H_c(\tilde{\Gamma})\to H_*$ as $\tilde{\Gamma}\to 0$.
Next, since $\varrho$ is Lipschitz continuous in $[0,1/2]$, $\varrho(0)>0$, and $g'(\tilde{\Gamma})=\big( \tilde{\Gamma} \varrho'(\tilde{\Gamma}) -\varrho(\tilde{\Gamma}) \big)/ \tilde{\Gamma}^2$ for a.a. $\tilde{\Gamma}\in [0,1/2]$ by \eqref{assumrho}, the function $g$ is  decreasing in a right-neighborhood of zero. Thus, due to its singularity at zero and its boundedness on every compact subset of $(0,1]$, there is $\overline{\Gamma}\in (0,1/2)$ small enough such that \begin{equation}
g \text{ is decreasing on } \left( 0,  \overline{\Gamma} \right) \;\;\text{ and }\;\; \left\{
\begin{array}{lcl}
g(\tilde{\Gamma})> g(\overline{\Gamma}) & \text{ for } & \tilde{\Gamma}\in \left( 0,\overline{\Gamma} \right)\ , \\
 & & \\
g(\tilde{\Gamma}) < g(\overline{\Gamma}) & \text{ for } & \tilde{\Gamma}\in \left( \overline{\Gamma},1 \right)\ ,
\end{array}
\right. \label{bach}
\end{equation}
and
\begin{equation}
\varrho \text{ is differentiable at } \overline{\Gamma} \;\;\;\text{ with }\;\;\; \overline{\Gamma} \varrho'(\overline{\Gamma}) - \varrho(\overline{\Gamma}) < 0 \ . \label{pachelbel}
\end{equation}
Setting $\overline{H} := H_c(\overline{\Gamma})$ the choice of $\overline{\Gamma}$ along with \eqref{eq_Gammat}, \eqref{bach}, \eqref{pachelbel}, and the definition of $H_c$ guarantees that 
\begin{equation} \label{eq_signf1}
f_1(\overline{H},\tilde{\Gamma}) <0 \ \text{for}\ \tilde{\Gamma} \in (0,\overline{\Gamma})\quad \text{and}\quad 
f_1(\overline{H},\tilde{\Gamma}) >0 \ \text{for}\ \tilde{\Gamma} \in (\overline{\Gamma},1)
\end{equation}
and, recalling also the monotonicity of $\phi,$  that $H_c$ is increasing on $(0,\overline{\Gamma})$. A further consequence of \eqref{eq_Gammat} and \eqref{eq_signf1} is that
\begin{equation}
\overline{H} < H_c(\tilde{\Gamma}) \quad \text{ and } \quad f_1(\tilde{H},\tilde{\Gamma})>0 \quad \text{ for } \quad (\tilde{H},\tilde{\Gamma})\in [H_*,\overline{H}]\times (\overline{\Gamma},1)\ . \label{corelli}
\end{equation}
The above observations \eqref{eq_signf1} motivate 
to decompose $[H_*,2H_*] \times (0,1)$ according to 
$$
Q_1 = [H_*,\overline{H}] \times (0,\overline{\Gamma}]\ , \; 
Q_2 = [H_*,\overline{H}] \times (\overline{\Gamma},1)\ , \;  
Q_3 = (\overline{H},2H_*] \times (\overline{\Gamma},1)\ , \;  
Q_4 = (\overline{H},2H_*] \times (0,\overline{\Gamma}] \ ,
$$
see Figure~\ref{fig1}. Observe that, due to \eqref{eq_signf1} and the properties of $f_1$ and $f_2$, we have
\begin{eqnarray}
&& Q_1 \text{ is positively invariant for \eqref{app_d1}-\eqref{app_d2},}\label{eq_Q1}\\
&& \text{a trajectory of \eqref{app_d1}-\eqref{app_d2} can escape $Q_3$ only through $\overline{{Q}_3} \cap Q_4,$} \label{eq_Q3}\\
&&\text{a trajectory can escape of \eqref{app_d1}-\eqref{app_d2} $Q_4$ only through $Q_1 \cap \overline{{Q}_4}.$} \label{eq_Q4}
\end{eqnarray}

\begin{figure}
\centering\includegraphics[width=10cm]{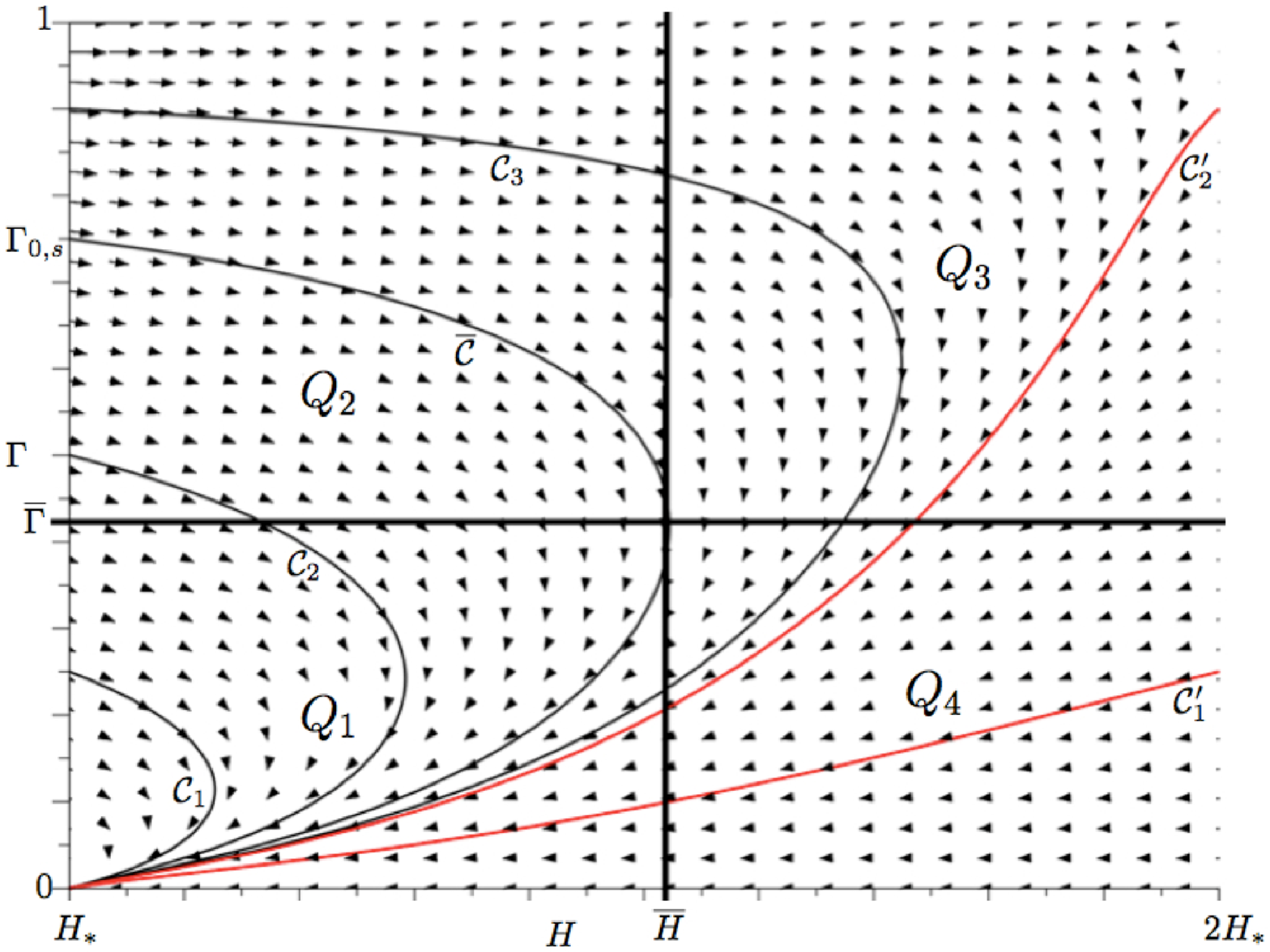}
\caption{\small Phase plane for \eqref{app_d1}-\eqref{app_d2}}\label{fig1}
\end{figure}
\medskip

We now study the properties of solutions to \eqref{app_d1}-\eqref{app_d2} with either $H_0=H_*$ or $H_0=2H_*$ in \eqref{app_d3}. A first step in that direction is the following:

\begin{lemma} \label{le.a1}
There exists a unique $\Gamma_{0,s}\in (\overline{\Gamma} , 1)$ such that the non-extendable solution $(H_s,\Gamma_s)$ to \eqref{app_d1}-\eqref{app_d2} with initial condition $(H_s,\Gamma_s)(0)=(H_*,\Gamma_{0,s})$ satisfies 
$$
\left( \overline{H} , \overline{\Gamma} \right) \in \mathcal{C}_l(\Gamma_{0,s}) := \left\{ (H_s,\Gamma_s)(\xi)\ : \ \xi\ge 0 \right\}\ .
$$
\end{lemma}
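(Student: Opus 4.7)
The plan is a shooting argument along the segment $\{H_*\} \times (\overline\Gamma, 1)$. For each $\Gamma_0 \in (\overline\Gamma,1)$, the Cauchy--Lipschitz theorem (which applies thanks to \eqref{assumrho}) provides a unique non-extendable solution $(H(\cdot;\Gamma_0),\Gamma(\cdot;\Gamma_0))$ to \eqref{app_d1}-\eqref{app_d2} with $(H,\Gamma)(0) = (H_*,\Gamma_0)$, depending continuously on $\Gamma_0$. Since $f_1(H_*,\cdot) > 0$ and $f_2(H_*,\cdot) < 0$, the trajectory enters the interior of $Q_2$ for $\xi > 0$ small, and as long as it remains there $H$ is strictly increasing and $\Gamma$ is strictly decreasing, by \eqref{corelli} and the negativity of $f_2$ on $\{\tilde H < 3H_*\} \cap (0,\infty)\times(0,1)$. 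Since $(f_1,f_2)$ has no zero in $Q_2$, boundedness plus monotonicity then force the trajectory to exit $Q_2$ in finite time, through either the bottom segment $[H_*,\overline H) \times \{\overline\Gamma\}$, the right segment $\{\overline H\} \times (\overline\Gamma,1)$, or precisely through the corner $(\overline H,\overline\Gamma)$.

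I would accordingly partition $(\overline\Gamma,1)$ as the disjoint union $I_1 \cup I_c \cup I_3$ according to these three exit modes, and reduce the lemma to proving that $I_1$ and $I_3$ are open and non-empty, which forces $I_c \neq \emptyset$ by connectedness, and that $I_c$ is a singleton. Openness of $I_1$ and $I_3$ is immediate from continuous dependence combined with the transversality of $(f_1,f_2)$: $f_2 < 0$ on the bottom segment and $f_1 > 0$ on the right segment by \eqref{eq_signf1}. For $I_1 \neq \emptyset$, take $\Gamma_0$ slightly above $\overline\Gamma$: on a compact neighborhood of $(H_*,\overline\Gamma)$ one has bounds $f_2 \leq -c_2 < 0$ and $f_1 \leq M_1$, so $\Gamma$ reaches $\overline\Gamma$ in time of order $\Gamma_0 - \overline\Gamma$ while $H$ remains close to $H_*$, forcing the exit through the bottom side.

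The main obstacle is $I_3 \neq \emptyset$, where the singularity of $\sigma$ at $1$ plays the decisive role. Writing $(d/d\xi)\sigma(\Gamma) = -\Gamma'/\varrho(\Gamma)$ and inserting the explicit form of $f_2$, the inequalities $H \geq H_*$, $\Gamma \leq 1$, $\varrho \geq 0$ yield a uniform upper bound $(d/d\xi)\sigma(\Gamma(\xi)) \leq 4/(\overline\Gamma H_*)$ throughout $Q_2$. Since $\sigma(\Gamma_0) \to -\infty$ as $\Gamma_0 \to 1$ by \eqref{i3}, integrating this bound shows that, for any fixed $\eta$ with $1-\eta \in (\overline\Gamma,1)$, the time $\tau_\Gamma(\Gamma_0)$ needed for $\Gamma$ to decrease from $\Gamma_0$ to $1-\eta$ tends to $+\infty$ as $\Gamma_0 \to 1$. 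On the other hand, by continuity of $f_1$ and the vanishing $\varrho(1)=0$, one may choose $\eta$ small enough that $f_1 \geq c_\eta > 0$ on $[H_*,\overline H] \times [1-\eta,1)$, so, as long as $\Gamma \geq 1-\eta$, the time for $H$ to reach $\overline H$ is bounded above by $(\overline H - H_*)/c_\eta$, uniformly in $\Gamma_0$. Taking $\Gamma_0$ close enough to $1$ that $\tau_\Gamma(\Gamma_0)$ exceeds this uniform bound puts the exit on the right segment, giving $\Gamma_0 \in I_3$.

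Finally, uniqueness of $\Gamma_{0,s}$ follows from uniqueness of ODE solutions at the interior point $(\overline H,\overline\Gamma)$: all trajectories associated with $I_c$ share the same orbit in the phase plane, which, traced backward into $Q_2$ where $H$ is strictly monotone, meets the line $\{\tilde H = H_*\}$ at most once. The quantitative balance between the divergence of $|\sigma|$ at $1$ and the uniform lower bound on $f_1$ in a one-sided neighborhood of $\Gamma=1$ is the heart of the argument.
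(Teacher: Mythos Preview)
Your shooting argument is correct and complete, but it differs from the paper's approach. The paper works \emph{backward}: it starts the flow at $(\overline H,\overline\Gamma)$, computes $H'(0)=0$ and $H''(0)<0$ (using \eqref{pachelbel}) to conclude that the backward trajectory enters $Q_2$, and then invokes Proposition~\ref{prop_app1}(b) together with Proposition~\ref{prglobal} to force that trajectory to reach the line $\{H_*\}\times(\overline\Gamma,1)$ at some finite negative time. Uniqueness is then immediate from the Cauchy--Lipschitz theorem. By contrast, you shoot \emph{forward} from the whole segment $\{H_*\}\times(\overline\Gamma,1)$, classify exit modes from $Q_2$, and use connectedness. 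Your key step---showing $I_3\ne\emptyset$ via the bound $(\mathrm{d}/\mathrm{d}\xi)\sigma(\Gamma)\le 4/(\overline\Gamma H_*)$ combined with $\sigma(\Gamma_0)\to-\infty$---plays the same role that the $\sigma$-estimates in the proof of Proposition~\ref{prop_app1}(b) play for the paper. The paper's route is shorter because it reuses the global machinery of Propositions~\ref{prop_app1} and~\ref{prglobal} and handles a single trajectory; yours is more self-contained (you never touch the second derivative of $H$ or Proposition~\ref{prglobal}) but requires the full shooting apparatus. Both hinge on the same non-integrability of $\sigma'$ near $1$.
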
 

\begin{proof}
Let $(H,\Gamma)$ be the non-extendable solution to \eqref{app_d1}-\eqref{app_d2} with initial condition $(H,\Gamma)(0) = \left( \overline{H} , \overline{\Gamma} \right)$ defined on $(-\xi_\alpha,\xi_\omega)$. By Proposition~\ref{prop_app1}~(a) and Proposition~\ref{prop_step2}, we have $\xi_\omega=\infty$, 
$$
(H,\Gamma)(\xi) \in (H_*,2H_*)\times (0,1)\ , \quad \xi\in (0,\infty)\ , \quad \text{ and } \quad \lim_{\xi\to\infty} (H,\Gamma)(\xi) = (H_*,0)\ .
$$
Furthermore, differentiating \eqref{app_d1}, we infer from \eqref{pachelbel} and the properties
$$
6 \overline{\Gamma}\, \overline{H} (2H_* - \overline{H}) - 12D (\overline{H}-H_*) \varrho(\overline{\Gamma}) = H'(0) = 0
$$
stemming from the definition of $\left( \overline{H} , \overline{\Gamma} \right)$ that
$$
H''(0) = \frac{12D (\overline{H} - H_*)}{G \overline{H}^3 \overline{\Gamma} \left( \overline{\Gamma}\ \overline{H} + 4 D \varrho(\overline{\Gamma}) \right)}\ \left( \varrho(\overline{\Gamma}) - \overline{\Gamma} \varrho'(\overline{\Gamma}) \right)\ \Gamma'(0)\ .
$$
Thanks to \eqref{pachelbel} and $\Gamma'(0)<0$, we conclude that $H''(0)<0$. Consequently, there is $\delta>0$ such that $H(\xi)<\overline{H}$ for $\xi\in (-\delta,\delta)\setminus\{0\}$. Recalling that $\Gamma$ is decreasing on $(-\xi_\alpha,\infty)$ by Proposition~\ref{prop_app1}~(a), there holds $\Gamma(-\xi)> \overline\Gamma >\Gamma(\xi)$ for $\xi\in (0,\delta)$ so that $(H,\Gamma)(\xi)\in Q_2$ for $\xi\in (-\delta,0)$ and $(H,\Gamma)(\xi)\in Q_1$ for $\xi\in (0,\delta)$. On the one hand the already mentioned positive invariance of $Q_1$ implies that $(H,\Gamma)(\xi)\in Q_1$ for $\xi\in (0,\infty)$. On the other hand, either $\xi_\alpha<\infty$ and we infer from Proposition~\ref{prop_app1}~(b) that there is $\tilde{\xi}_\alpha\in (-\xi_\alpha,0)$ such that $H(\tilde{\xi}_\alpha)=H_*$. We then put $\Gamma_{0,s} := \Gamma(\tilde{\xi}_\alpha)$ and $(H_s,\Gamma_s)(\xi) := (H,\Gamma)(\xi+\tilde{\xi}_\alpha)$ for $\xi\in (-\xi_\alpha - \tilde{\xi}_\alpha, \infty)$ to complete the proof. Or $\xi_\alpha=\infty$ and, according to the definition of $\delta$, 
$$
\xi_2 := \inf\left\{ \xi < 0\ : \ (H,\Gamma)(\zeta)\in (H_*,\overline{H})\times (\overline{\Gamma},1) \;\;\text{ for all }\;\; \zeta \in (\xi,0) \right\} \le - \delta\ .
$$
By \eqref{app_d1}, \eqref{app_d2}, and \eqref{corelli},
$$
H'(\xi)>0 \quad\text{ and }\quad \Gamma'(\xi)<0 \ , \quad \xi\in (\xi_2,0)\ .
$$
In particular, $H(\xi)\in (H_*,\overline{H})$ for all $\xi\in (\xi_2,0)$ and Proposition~\ref{prglobal} implies that $\xi_2$ is finite. As $\xi_\alpha=\infty$, we necessarily have  $\Gamma(\xi_2)<1$ and thus $H(\xi_2)=H_*$. We then set $\Gamma_{0,s} := \Gamma(\xi_2)$ and complete the proof as in the previous case. 
\end{proof}

Having identified the trajectory passing through $\left( \overline{H} , \overline{\Gamma} \right)$ (which corresponds to the curve $\bar{\mathcal{C}}$ in Figure~\ref{fig1}) we are in a position to classify the behavior of the solutions to \eqref{app_d1}-\eqref{app_d3} emanating from the vertical sides of the rectangle $(H_*,2H_*)\times (0,1)$. 

\begin{lemma}\label{le.a2} 
Given $\Gamma_0 \in (0,1),$ let $(H,\Gamma)$ be the unique non-extendable solution to \eqref{app_d1}-\eqref{app_d2} with initial condition $(H_*,\Gamma_0),$ which is defined for all $\xi\in [0,\infty)$ and connects $(H_*,\Gamma_0)$  to $(H_*,0)$ according to Proposition~\ref{prop_app1} and Proposition~\ref{prop_step2}. Introducing 
$$
\mathcal{C}_l(\Gamma_0) := \left\{ (H,\Gamma)(\xi)\ :\ \xi\ge 0 \right\}\ ,
$$
the courses of $\mathcal{C}_l(\Gamma_0)$ are the following:
\begin{itemize}
\item[\bf (L1)] If $\Gamma_0\in (0,\overline{\Gamma}]$, then $\mathcal{C}_l(\Gamma_0)$ remains in $Q_1$, see the trajectory $\mathcal{C}_1$ in Figure~\ref{fig1}.
\item[\bf (L2)] If $\Gamma_0\in (\overline{\Gamma}, \Gamma_{0,s}]$, then $\mathcal{C}_l(\Gamma_0)$ passes successively through $Q_2$ and then through $Q_1$, see the trajectory $\mathcal{C}_2$ in Figure~\ref{fig1}.
\item[\bf (L3)] If $\Gamma_0\in (\Gamma_{0,s},1)$, then $\mathcal{C}_l(\Gamma_0)$ passes successively through $Q_2$, $Q_3$,  $Q_4$, and finally through $Q_1$, see the trajectory $\mathcal{C}_3$ in Figure~\ref{fig1}.  
\end{itemize}
\end{lemma}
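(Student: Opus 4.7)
The plan is to combine uniqueness of solutions with the flow-direction picture \eqref{eq_Q1}--\eqref{eq_Q4}. Since $f_2<0$ on $[H_*,2H_*]\times(0,1)$, $\Gamma$ is strictly decreasing along any trajectory in the open rectangle, and I may reparametrise each $\mathcal{C}_l(\Gamma_0)$ as a graph $H=\mathcal{H}_{\Gamma_0}(\Gamma)$ solving the scalar Lipschitz ODE
$$
\frac{\mathrm d H}{\mathrm d\Gamma}=\frac{f_1(H,\Gamma)}{f_2(H,\Gamma)}\qquad\text{on}\quad(H_*,2H_*)\times(0,1).
$$
The resulting non-crossing property is the main tool: two distinct such graphs cannot coincide at any interior point, and their ordering, once determined by the initial values, persists.

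For \textbf{(L1)}, the initial point $(H_*,\Gamma_0)$ lies on the left face of $\overline{Q_1}$; since $f_1(H_*,\Gamma_0)>0$ and $f_2(H_*,\Gamma_0)<0$, the trajectory immediately enters the interior of $Q_1$, and the positive invariance \eqref{eq_Q1} traps it there forever.

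For \textbf{(L2)} with $\Gamma_0\in(\overline\Gamma,\Gamma_{0,s})$, I compare $\mathcal{C}_l(\Gamma_0)$ with the curve $\mathcal{C}_l(\Gamma_{0,s})$ produced by Lemma~\ref{le.a1}. Since $\mathcal{H}_{\Gamma_{0,s}}(\Gamma_0)\in(H_*,\overline H)$ while $\mathcal{H}_{\Gamma_0}(\Gamma_0)=H_*$, non-crossing propagates the strict inequality $\mathcal{H}_{\Gamma_0}(\Gamma)<\mathcal{H}_{\Gamma_{0,s}}(\Gamma)\le\overline H$ for every $\Gamma\in(\overline\Gamma,\Gamma_0)$. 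Since $f_1>0$ on $[H_*,\overline H]\times(\overline\Gamma,1)$ by \eqref{corelli}, $H$ is moreover strictly increasing as long as the trajectory sits in $Q_2$. Thus $\mathcal{C}_l(\Gamma_0)$ remains in $Q_2$ until $\Gamma$ hits $\overline\Gamma$, at which moment $H\le\overline H$, so it enters $\overline{Q_1}$, and positive invariance concludes. The boundary case $\Gamma_0=\Gamma_{0,s}$ is exactly Lemma~\ref{le.a1}.

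For \textbf{(L3)} with $\Gamma_0\in(\Gamma_{0,s},1)$, the same argument yields $\mathcal{H}_{\Gamma_0}(\Gamma)>\mathcal{H}_{\Gamma_{0,s}}(\Gamma)$ on $(\overline\Gamma,\Gamma_{0,s}]$. Letting $\Gamma\searrow\overline\Gamma$ and noting that equality would force the two graphs to coincide by uniqueness, one finds $\mathcal{H}_{\Gamma_0}(\overline\Gamma)>\overline H$; consequently $\mathcal{C}_l(\Gamma_0)$ has already crossed the line $\{H=\overline H\}$ while $\Gamma>\overline\Gamma$, entering $Q_3$ from $Q_2$. Because $\Gamma$ is strictly decreasing and tends to $0$, \eqref{eq_Q3} forces $\mathcal{C}_l(\Gamma_0)$ to leave $Q_3$ through $\overline{Q_3}\cap Q_4$, entering $Q_4$. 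Inside $Q_4$, the monotonicity of $H_c$ on $(0,\overline\Gamma)$ gives $H_c(\Gamma)<\overline H<H$, whence $f_1<0$ by \eqref{mozart} and $H$ strictly decreases; by \eqref{eq_Q4} it can only exit $Q_4$ through $\{H=\overline H\}$ into $Q_1$, and this must occur at some finite $\xi$, for otherwise Proposition~\ref{prop_step2} would force $(H_*,0)\in\overline{Q_4}$, which is false. Positive invariance of $Q_1$ then closes the argument. The only subtlety is that all non-crossing comparisons must be carried out in the open interior $(H_*,2H_*)\times(0,1)$, which is legitimate because $f_1(H_*,\Gamma_0)>0$ guarantees that every trajectory leaves the boundary $\{H=H_*\}$ immediately after $\xi=0$.
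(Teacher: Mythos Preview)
Your proof is correct and follows essentially the same strategy as the paper: both arguments rely on the positive invariance \eqref{eq_Q1}, the escape rules \eqref{eq_Q3}--\eqref{eq_Q4}, and a non-crossing comparison with the separatrix $\mathcal{C}_l(\Gamma_{0,s})$ provided by Lemma~\ref{le.a1}. Your explicit reparametrisation of trajectories as graphs $H=\mathcal{H}_{\Gamma_0}(\Gamma)$ solving the scalar equation $\mathrm{d}H/\mathrm{d}\Gamma=f_1/f_2$ makes the Cauchy--Lipschitz ordering argument more transparent than in the paper, and your additional observation that $H_c(\Gamma)<\overline{H}$ throughout $Q_4$ (forcing $f_1<0$ there) is a nice supplement, though strictly speaking \eqref{eq_Q4} alone already suffices for that step.
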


\begin{proof}
\textbf{(L1)}: The first assertion follows from \eqref{eq_Q1}:  $Q_1$ is positively invariant for \eqref{app_d1}-\eqref{app_d2}.

\smallskip

\noindent \textbf{(L2)}: If $\Gamma_0=\Gamma_{0,s}$, then $\mathcal{C}_l(\Gamma_{0,s})$ passes successively through $Q_2$ and then through $Q_1$ by Lemma~\ref{le.a1}. If $\Gamma_0\in (\overline{\Gamma},\Gamma_{0,s})$, we infer from the monotonicity of $\Gamma$ and the Cauchy-Lipschitz theorem that $\mathcal{C}_l(\Gamma_0) \cap Q_2$ stays below $\mathcal{C}_l(\Gamma_{0,s})\cap Q_2$ and must cross $Q_1\cap \overline{Q_2}$ at some $\xi>0$ with $H(\xi)\in (H_*,\overline{H})$ and then enter $Q_1$.  We complete the proof with the help of the positive invariance of $Q_1$. 

\smallskip

\noindent \textbf{(L3)}: If $\Gamma_0\in (\Gamma_{0,s},1)$ then $\mathcal{C}_l(\Gamma_0) \cap Q_2$ lies above $\mathcal{C}_l(\Gamma_{0,s})\cap Q_2$ and thus enters $Q_3$. Since $(H,\Gamma)(\xi)\to (H_*,0)$ as $\xi\to \infty$ by Proposition~\ref{prop_step2} and $(H,\Gamma)(\xi)\ne (\overline{H},\overline{\Gamma})$ for all $\xi\ge 0$ by Lemma~\ref{le.a1}, it follows from \eqref{eq_Q3} and \eqref{eq_Q4} that the curve $\mathcal{C}_l(\Gamma_0)$ passes from $Q_3$ to $Q_4$ and finally from $Q_4$ to $Q_1$.
\end{proof}

\medskip

If $\Gamma_0\in (\Gamma_{0,s},1)$, then the curve  $\mathcal{C}_l(\Gamma_0)$ satisfies the alternative \textbf{(L3)} by Lemma~\ref{le.a2} and thus intersects $\overline{Q_4} \cap Q_1$ in exactly one point $(\overline{H},X_l(\Gamma_0))$ with $X_l(\Gamma_0)\in (0,\overline{\Gamma}).$ We also set $X_l(\Gamma_{0,s}) = \overline{\Gamma}$ and define
\begin{equation}
\mathcal{X}_l := \left\{ X_l(\Gamma_0)\ :\ \Gamma_0 \in  [\Gamma_{0,s},1) \right\}\ , \quad X_l := \inf \mathcal{X}_l \ . \label{vivaldi}
\end{equation}

\begin{lemma}\label{le.a3}
Given $\Gamma_0 \in (0,1),$ let $(H,\Gamma)$ be the unique non-extendable solution to \eqref{app_d1}-\eqref{app_d2} with initial condition $(2H_*,\Gamma_0),$ which is defined for all $\xi\in [0,\infty)$ and connects $(2H_*,\Gamma_0)$  to $(H_*,0)$ according to Proposition~\ref{prop_app1} and Proposition~\ref{prop_step2}. Introducing 
$$
\mathcal{C}_r(\Gamma_0) := \left\{ (H,\Gamma)(\xi)\ :\ \xi\ge 0 \right\}\ ,
$$
the courses of $\mathcal{C}_r(\Gamma_0)$ are the following: 
\begin{itemize}
\item[\bf (R1)] If $\Gamma_0\in (0,\overline{\Gamma}]$, then $\mathcal C_r(\Gamma_0)$ passes through $Q_4$ and then through $Q_1$, see the trajectory $\mathcal{C}'_1$ in Figure~\ref{fig1}. 
\item[\bf (R2)] If $\Gamma_0\in (\overline{\Gamma},1)$, then $\mathcal C_r(\Gamma_0)$ passes through $Q_3,$ $Q_4,$  and finally through $Q_1$, see the trajectory $\mathcal{C}'_2$ in Figure~\ref{fig1}.
\end{itemize}
\end{lemma}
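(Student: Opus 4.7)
The plan is to trace the curve $\mathcal{C}_r(\Gamma_0)$ through the regions $Q_1,\dots,Q_4$ by mimicking the strategy used in the proof of Lemma~\ref{le.a2}. I combine the sign information for $f_1$ on the vertical edges (namely $f_1(2H_*,\cdot)<0$ from \eqref{eq_f1} together with \eqref{eq_signf1}), the fact that $f_2<0$ throughout $[H_*,2H_*]\times(0,1)$, the escape and invariance properties \eqref{eq_Q1}, \eqref{eq_Q3}, \eqref{eq_Q4}, and the limit $(H,\Gamma)(\xi)\to(H_*,0)$ provided by Proposition~\ref{prop_step2}. The starting observation is that at $\xi=0$ one has $H'(0)=f_1(2H_*,\Gamma_0)<0$ and $\Gamma'(0)=f_2(2H_*,\Gamma_0)<0$, so the trajectory immediately enters the open rectangle, with both $H$ and $\Gamma$ strictly decreasing for small $\xi>0$.

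For case \textbf{(R1)} with $\Gamma_0\in(0,\overline{\Gamma}]$, continuity places $(H,\Gamma)(\xi)\in Q_4$ as soon as $\xi>0$ is small, since then $H(\xi)\in(\overline{H},2H_*)$ and $\Gamma(\xi)\in(0,\overline{\Gamma})$. The escape rule \eqref{eq_Q4} restricts the exit from $Q_4$ to its interface with $Q_1$. Indefinite containment in $Q_4$ is excluded by noting that $H\ge\overline{H}>H_*$ in $Q_4$, which is incompatible with $H(\xi)\to H_*$ as $\xi\to\infty$. Hence $\mathcal{C}_r(\Gamma_0)$ enters $Q_1$ at some finite $\xi$, and then remains there by the positive invariance \eqref{eq_Q1}.

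For case \textbf{(R2)} with $\Gamma_0\in(\overline{\Gamma},1)$, continuity places $(H,\Gamma)(\xi)\in Q_3$ for all sufficiently small $\xi>0$. The escape rule \eqref{eq_Q3} restricts the exit from $Q_3$ to its interface with $Q_4$, and once again Proposition~\ref{prop_step2} rules out the trajectory remaining in $Q_3$ indefinitely, since $\Gamma>\overline{\Gamma}$ on $Q_3$ is incompatible with $\Gamma(\xi)\to 0$. Thus the trajectory passes from $Q_3$ into $Q_4$ at a finite time, after which the \textbf{(R1)} argument transports it into $Q_1$.

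The only delicate point is ensuring that the transitions between regions occur in finite time rather than only asymptotically; this is handled uniformly by Proposition~\ref{prop_step2}, which forces the trajectory to leave any region whose closure excludes $(H_*,0)$. The rest is routine bookkeeping on the signs of $f_1$ and $f_2$ along each interface.
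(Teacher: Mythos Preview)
Your proof is correct and follows essentially the same approach as the paper's: both arguments use the sign of $f_1$ at the starting point, the escape rules \eqref{eq_Q3}--\eqref{eq_Q4}, the positive invariance \eqref{eq_Q1} of $Q_1$, and the $\omega$-limit from Proposition~\ref{prop_step2} to force the trajectory to eventually land in $Q_1$. Your version is slightly more explicit than the paper's in spelling out \emph{why} the trajectory cannot remain in $Q_3$ or $Q_4$ indefinitely (you name the specific bound $H>\overline{H}$ in $Q_4$, respectively $\Gamma>\overline{\Gamma}$ in $Q_3$, that conflicts with the limit $(H_*,0)$), whereas the paper just appeals to ``the behavior for large $\xi$''; but this is a difference of exposition, not of strategy.
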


\begin{proof}
\textbf{(R1)}: If $\Gamma_0\in (0,\overline{\Gamma}]$, then $(H,\Gamma)(0)\in Q_4$ and we infer from the monotonicity of $\Gamma$ stated in Proposition~\ref{prop_app1}~(a) and the behavior of the trajectory as $\xi\to\infty$ described in Proposition~\ref{prop_step2} that $\mathcal{C}_r(\Gamma_0)$ remains in $Q_4$ until it crosses $\overline{Q_4}\cap Q_1$ and enters in $Q_1$. It subsequently remains in $Q_1$ owing to the positive invariance of $Q_1$.  

\smallskip

\noindent \textbf{(R2)}: If $\Gamma_0\in (\overline{\Gamma},1)$, then $(H,\Gamma)(0)\in Q_3$ and the behavior for large $\xi$ of $(H,\Gamma)$ implies that it has to enter in $Q_1$. Owing to \eqref{eq_Q3}, the trajectory leaves $Q_3$ through $\overline{Q_3}\cap Q_4$. It then goes through $Q_4$ before entering $Q_1$ and staying there as in the previous alternative.
\end{proof}

\medskip

A consequence of Lemma~\ref{le.a3} is that, for each $\Gamma_0\in (0,1)$, the curve $\mathcal{C}_r(\Gamma_0)$ intersects $\overline{Q_4}\cap Q_1$ in exactly one point $(\overline{H},X_r(\Gamma_0))$ with $X_r(\Gamma_0)\in (0,\overline{\Gamma}).$ We then define
\begin{equation}
\mathcal{X}_r := \left\{ X_r(\Gamma_0)\ :\ \Gamma_0 \in (0,1) \right\}\ , \quad X_r := \sup \mathcal{X}_r \ . \label{haendel}
\end{equation}

\medskip

We now derive further properties of $X_l$ (defined in \eqref{vivaldi}) and $X_r$ (defined in \eqref{haendel}). 

\begin{lemma} \label{lem_step3}
The following assertions hold true:
\begin{itemize}
\item[(i)] $X_l \notin \mathcal X_l$ and $X_l < \overline{\Gamma}$,
\item[(ii)] $X_r \notin \mathcal X_r$ and $X_r>0$,
\item[(iii)] $0 < X_r \le  X_l < \overline{\Gamma}.$
\end{itemize}
\end{lemma}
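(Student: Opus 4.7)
The plan is to combine continuous dependence on initial data for the system \eqref{app_d1}-\eqref{app_d2} with uniqueness of ODE trajectories in order to show that the maps $\Gamma_0 \mapsto X_l(\Gamma_0)$ and $\tilde{\Gamma}_0 \mapsto X_r(\tilde{\Gamma}_0)$ are continuous and strictly monotonic on their parameter ranges, and that their images $\mathcal{X}_l$ and $\mathcal{X}_r$ are disjoint subsets of $(0,\overline{\Gamma}]$. The assertions (i)--(iii) then follow from the order structure of these images.

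First I would verify that the flow of \eqref{app_d1}-\eqref{app_d2} crosses the open segment $\{\overline{H}\} \times (0,\overline{\Gamma})$ transversally: the monotonicity of $H_c$ on $(0,\overline{\Gamma})$ together with \eqref{mozart} give $f_1(\overline{H},\tilde{\Gamma}) < 0$ for all $\tilde{\Gamma} \in (0,\overline{\Gamma})$, so Cauchy--Lipschitz yields continuity of $X_l$ on $(\Gamma_{0,s},1)$ and of $X_r$ on $(0,1)$. For injectivity, assume $X_l(\Gamma_0^1) = X_l(\Gamma_0^2)$: the trajectories $\mathcal{C}_l(\Gamma_0^1)$ and $\mathcal{C}_l(\Gamma_0^2)$ share the point $(\overline{H},X_l(\Gamma_0^1))$ and hence coincide up to a translation in $\xi$ by uniqueness, but Proposition~\ref{prop_app1}~(a) ensures each attains $H=H_*$ only at $\xi=0$, forcing the translation to vanish and hence $\Gamma_0^1 = \Gamma_0^2$. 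An analogous argument using the unique time at which $H=2H_*$ gives injectivity of $X_r$. A continuous injection on an interval is strictly monotonic; the direction of $X_l$ is fixed by Lemma~\ref{le.a2}, which provides $X_l(\Gamma_{0,s}) = \overline{\Gamma}$ together with $X_l(\Gamma_0) < \overline{\Gamma}$ for $\Gamma_0 \in (\Gamma_{0,s},1)$, so $X_l$ decreases strictly, while the direction of $X_r$ is fixed by $0 < X_r(\tilde{\Gamma}_0) < \tilde{\Gamma}_0$ (from the strict monotonicity of $\Gamma$ along $\mathcal{C}_r(\tilde{\Gamma}_0)$ in Proposition~\ref{prop_app1}~(a)), which forces $X_r(\tilde{\Gamma}_0) \to 0^+$ as $\tilde{\Gamma}_0 \to 0^+$, so $X_r$ increases strictly.

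With these monotonicity statements in hand, the set $\mathcal{X}_l$ contains the open interval $X_l((\Gamma_{0,s},1)) = (X_l,\alpha_l)$ where $\alpha_l := \lim_{\Gamma_0 \to \Gamma_{0,s}^+} X_l(\Gamma_0) \leq \overline{\Gamma}$ and $X_l = \lim_{\Gamma_0 \to 1^-} X_l(\Gamma_0)$, while $\mathcal{X}_r = (0,X_r)$ with $X_r = \lim_{\tilde{\Gamma}_0 \to 1^-} X_r(\tilde{\Gamma}_0)$. Since both intervals are open at the relevant extremity, one has $X_l \notin \mathcal{X}_l$ and $X_r \notin \mathcal{X}_r$; choosing any $\Gamma_0 \in (\Gamma_{0,s},1)$ yields $X_l \leq X_l(\Gamma_0) < \overline{\Gamma}$, and choosing any $\tilde{\Gamma}_0 \in (0,1)$ yields $X_r \geq X_r(\tilde{\Gamma}_0) > 0$. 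This establishes (i) and (ii). For (iii), I would deduce $X_r \leq X_l$ from the disjointness $\mathcal{X}_l \cap \mathcal{X}_r = \emptyset$: if $Y$ were common to both sets, the trajectories $\mathcal{C}_l(\Gamma_0)$ and $\mathcal{C}_r(\tilde{\Gamma}_0)$ reaching $(\overline{H},Y)$ would coincide up to translation by uniqueness, but $\mathcal{C}_l(\Gamma_0)$ attains $H=H_*$ at $\xi=0$ while $\mathcal{C}_r(\tilde{\Gamma}_0)$ stays strictly above $H_*$ on $[0,\infty)$ by Proposition~\ref{prop_app1}~(a), a contradiction. Disjointness of $(X_l,\alpha_l)$ and $(0,X_r)$ then forces $X_r \leq X_l$, since $X_r > X_l$ would place the non-empty open interval $(X_l,\min\{\alpha_l,X_r\})$ into both sets.

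The principal subtlety is the behaviour of $X_l$ near $\Gamma_{0,s}$, where the flow fails to be transversal to the segment $\{\overline{H}\} \times (0,\overline{\Gamma}]$ at the degenerate point $(\overline{H},\overline{\Gamma})$; the argument above is designed to sidestep this difficulty by only invoking continuity and monotonicity of $X_l$ on the open interval $(\Gamma_{0,s},1)$, supplemented by the boundary value $X_l(\Gamma_{0,s}) = \overline{\Gamma}$ from Lemma~\ref{le.a2}.
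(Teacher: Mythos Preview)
Your argument is essentially correct and reaches the conclusions, but it is organised quite differently from the paper's proof. The paper never invokes continuous dependence or transversality. For (i) it argues by a one-line ordering: if $X_l=X_l(\Gamma_0)$ for some $\Gamma_0$, pick any $\Gamma_1>\Gamma_0$; since $\mathcal{C}_l(\Gamma_1)$ cannot intersect $\mathcal{C}_l(\Gamma_0)$ (Cauchy--Lipschitz), it must cross $\overline{Q_4}\cap Q_1$ strictly below $\mathcal{C}_l(\Gamma_0)$, giving $X_l(\Gamma_1)<X_l$, a contradiction. Part (ii) is symmetric. For (iii) the paper observes directly that any $\mathcal{C}_r(\Gamma_1)$, in order to reach $\overline{Q_4}\cap Q_1$ above $\mathcal{C}_l(\Gamma_0)$, would have to cross $\mathcal{C}_l(\Gamma_0)$, which is impossible; hence $X_r(\Gamma_1)<X_l(\Gamma_0)$ for all pairs, and $X_r\le X_l$ follows. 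Your route establishes more structure (continuity and monotonicity of the exit maps, open-interval images, disjointness) and then reads off the same conclusions; the paper's approach trades that structure for a bare non-crossing argument that needs only uniqueness, not continuous dependence.

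One minor imprecision in your write-up: the sentence ``$X_l(\Gamma_{0,s})=\overline{\Gamma}$ together with $X_l(\Gamma_0)<\overline{\Gamma}$ \ldots\ so $X_l$ decreases strictly'' does not actually pin down the direction of monotonicity on the open interval $(\Gamma_{0,s},1)$, because you have not established continuity of $X_l$ at the endpoint $\Gamma_{0,s}$ (and you explicitly avoid doing so). Fortunately this direction is irrelevant to your proof: whatever the direction, the image $X_l((\Gamma_{0,s},1))$ is an open subinterval of $(0,\overline{\Gamma})$, so $\mathcal{X}_l$ is this open interval together with $\{\overline{\Gamma}\}$, its infimum is the left endpoint of the open interval, and that endpoint is not attained. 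The argument for (iii) likewise only needs that the image is an open interval with left endpoint $X_l$, not which parameter limit realises it. So your proof stands; just drop or weaken the unjustified monotonicity-direction claim for $X_l$.
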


\begin{proof}
(i): Since $X_l(\Gamma_{0,s})=\overline{\Gamma}$, we have $X_l\in [0,\overline{\Gamma}]$ and $\overline{\Gamma}\in \mathcal{X}_l$. Assume now for contradiction that $X_l\in \mathcal{X}_l.$ Then there exists $\Gamma_0 \in [\Gamma_{0,s},1)$ such that $X_l= X_l(\Gamma_0)$. Pick $\Gamma_1 > \Gamma_0$. On the one hand, the Cauchy-Lipschitz theorem guarantees that $\mathcal{C}_l(\Gamma_0) \cap \mathcal{C}_l(\Gamma_1)=\emptyset.$ On the other hand, $\mathcal{C}_l(\Gamma_1)$ satisfies \textbf{(L3)} by Lemma~\ref{le.a2} and $\mathcal{C}_l(\Gamma_1)$ lies below $\mathcal{C}_l(\Gamma_0)$ when both cross $\overline{Q_4}\cap Q_1$. This means that $X_l(\Gamma_1) < X_l(\Gamma_0) = X_l$ and contradicts the definition of $X_l$. Consequently, $X_l\in \mathcal{X}_l$ which in turn implies that $X_l<\overline{\Gamma}$ (as $\overline{\Gamma}\in \mathcal{X}_l$). 

\smallskip

\noindent (ii): A similar argument shows that $X_r \notin \mathcal{X}_r.$ Observing that $X_r(\Gamma_0)>0$ for all $\Gamma_0\in (0,1)$, we readily conclude that $X_r>0.$  

\smallskip

\noindent (iii): To prove the last assertion, we realize that, given $\Gamma_0\in [\Gamma_{0,s},1)$ and $\Gamma_1 \in (0,1)$, the curve $\mathcal{C}_r(\Gamma_1)$ cannot cross $\overline{Q_4}\cap Q_1$ above $\mathcal{C}_l(\Gamma_0)$ without having crossed $\mathcal{C}_l(\Gamma_0)$ previously. Since $\mathcal{C}_r(\Gamma_1)\cap \mathcal{C}_l(\Gamma_0) = \emptyset$ by the Cauchy-Lipschitz theorem, this yields $X_l(\Gamma_0) > X_r({\Gamma}_1),$ and consequently $X_r \leq X_l.$
\end{proof}

We are now prepared to prove the following result, which guarantees the existence of a global solution of \eqref{app_d1}-\eqref{app_d2} in $(H_*,2H_*) \times (0,1)$.

\begin{proposition} \label{prop_step3}
There exists a global solution $(H,\Gamma)$ to \eqref{app_d1}-\eqref{app_d2} such that 
$$
(H,\Gamma)(\xi) \in (H_*,2H_*) \times (0,1) \ , \quad \xi \in \R\,.
$$
\end{proposition}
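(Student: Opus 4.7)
My plan is a shooting argument using as parameter the two constants $X_l,X_r$ constructed in Lemmas~\ref{le.a2} and~\ref{le.a3}. I would take $Y:=X_l\in(0,\overline{\Gamma})$ (the belonging being ensured by Lemma~\ref{lem_step3}~(i)) and consider the unique non-extendable solution $(H,\Gamma)\in C^1((-\xi_\alpha,\xi_\omega),(0,\infty)\times(0,1))$ to \eqref{app_d1}-\eqref{app_d2} with initial datum $(H,\Gamma)(0)=(\overline{H},X_l)$. Since $(\overline{H},X_l)\in[H_*,2H_*]\times(0,1)$, Propositions~\ref{prop_app1} and~\ref{prop_step2} immediately yield the forward part of the conclusion: $\xi_\omega=\infty$, the confinement $(H,\Gamma)(\xi)\in(H_*,2H_*)\times(0,1)$ and the monotonicity of $\Gamma$ hold on $[0,\infty)$, and $(H,\Gamma)(\xi)\to(H_*,0)$ as $\xi\to\infty$. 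The entire content of the proposition thus lies in the analysis of the backward extension.

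The decisive step is to show by contradiction that no $\eta\in(-\xi_\alpha,0)$ satisfies $H(\eta)\in\{H_*,2H_*\}$. Assuming such an $\eta$ exists, I would take the largest one (available by continuity since $H(0)=\overline{H}$) and set $\tilde{\Gamma}_0:=\Gamma(\eta)\in(0,1)$. Translation invariance of \eqref{app_d1}-\eqref{app_d2} together with Cauchy--Lipschitz uniqueness force the shifted trajectory $\xi\mapsto(H,\Gamma)(\xi+\eta)$ to coincide, on $\xi\in[0,-\eta]$, with the corresponding piece of $\mathcal{C}_l(\tilde{\Gamma}_0)$ when $H(\eta)=H_*$ and of $\mathcal{C}_r(\tilde{\Gamma}_0)$ when $H(\eta)=2H_*$; in both cases the curve visits the point $(\overline{H},X_l)\in\overline{Q_4}\cap Q_1$. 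In the first case, the classification of Lemma~\ref{le.a2} combined with the observation that alternatives~\textbf{(L1)} and~\textbf{(L2)} with $\tilde{\Gamma}_0<\Gamma_{0,s}$ have $H<\overline{H}$ strictly throughout---since by \eqref{mozart} one has $f_1(\overline{H},\gamma)<0$ for $\gamma<\overline{\Gamma}$, preventing the trajectory from touching the segment $\{\overline{H}\}\times(0,\overline{\Gamma})$ from within $Q_1\cup Q_2$---forces alternative~\textbf{(L3)} with $X_l(\tilde{\Gamma}_0)=X_l$, so $X_l\in\mathcal{X}_l$, in contradiction with Lemma~\ref{lem_step3}~(i). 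In the second case Lemma~\ref{le.a3} analogously yields $X_r(\tilde{\Gamma}_0)=X_l$, whence $X_l\in\mathcal{X}_r$ and $X_l\le X_r$; together with Lemma~\ref{lem_step3}~(iii) this gives $X_l=X_r\in\mathcal{X}_r$, in contradiction with Lemma~\ref{lem_step3}~(ii).

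Consequently $H(\xi)\in(H_*,2H_*)$ throughout the backward existence interval, and the contrapositive of Proposition~\ref{prop_app1}~(b) then yields $\xi_\alpha=\infty$. Proposition~\ref{prglobal} is now available and delivers the limit $(H,\Gamma)(\xi)\to(2H_*,1)$ as $\xi\to-\infty$, which in particular ensures $\Gamma<1$ on all of $\mathbb{R}$; combined with the fact that $\Gamma$ is decreasing and positive, this shows $\Gamma(\xi)\in(X_l,1)$ for $\xi<0$. Hence $(H,\Gamma)(\mathbb{R})\subset(H_*,2H_*)\times(0,1)$ and the desired heteroclinic orbit is obtained. I expect the main obstacle to be precisely the backward contradiction argument above, where one must match the shifted piece of $(H,\Gamma)$ with a previously classified $\mathcal{C}_l$- or $\mathcal{C}_r$-curve and exploit the non-attainment of $X_l$ and $X_r$; the preparatory material of Lemmas~\ref{le.a1}--\ref{lem_step3} is exactly what makes this matching go through.
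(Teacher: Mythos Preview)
Your proposal is correct and follows essentially the same shooting strategy as the paper: launch the trajectory from $(\overline{H},\Gamma_*)$ with $\Gamma_*$ chosen so that the backward orbit cannot reach either vertical side of the rectangle, then invoke Proposition~\ref{prop_app1}(b) and Proposition~\ref{prglobal}. The only cosmetic differences are that the paper allows any $\Gamma_*\in[X_r,X_l]$ while you take $\Gamma_*=X_l$, and the paper compresses your case analysis ruling out \textbf{(L1)}/\textbf{(L2)} into the phrase ``$\mathcal{C}$ passes through $Q_3$''; note that your exclusion argument should also explicitly dispose of $\tilde{\Gamma}_0=\Gamma_{0,s}$, where $\mathcal{C}_l(\Gamma_{0,s})$ touches $H=\overline{H}$ only at $(\overline{H},\overline{\Gamma})\ne(\overline{H},X_l)$.
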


\begin{proof}
Invoking Lemma~\ref{lem_step3}, we see that $\mathcal{J}:=[X_r,X_l]$ is a non-empty compact interval contained in $(0,1)$. Pick $\Gamma_*\in \mathcal{J}$ and let $(H,\Gamma)$ be the non-extendable solution to \eqref{app_d1}-\eqref{app_d2} with initial condition $(\overline{H},\Gamma_*)$ which is defined on $(-\xi_{\alpha},\infty)$ according to Proposition~\ref{prop_app1}~(a). Introducing the corresponding trajectory 
$$
\mathcal{C} := \left\{ (H,\Gamma)(\xi)\ :\ \xi\in (-\xi_\alpha,\infty) \right\}\ ,
$$
we infer from the fact that $\mathcal C$ passes through $Q_3,$ Lemma~\ref{le.a2}, Lemma~\ref{le.a3}, and Lemma~\ref{lem_step3} that $\Gamma_*< X_l(\Gamma_0)$ for all $\Gamma_0\in (\Gamma_{0,s},1)$ and $\Gamma_*>X_r(\Gamma_0)$ for all $\Gamma_0\in (0,1)$, so that $\mathcal{C}$ cannot cross the vertical sides of the rectangle $(H_*,2H_*) \times (0,1)$.  Consequently, $(H,\Gamma)(\xi) \in (H_*,2H_*) \times (0,1)$ for all $\xi \in (-\xi_{\alpha},\infty).$  Proposition~\ref{prop_app1} then entails that $\xi_{\alpha} = \infty $ while Proposition~\ref{prop_step2} and Proposition~\ref{prglobal} ensure that $(H,\Gamma)$ connects $(2H_*,1)$ to $(H_*,0)$.  We have thus constructed a global solution to \eqref{app_d1}-\eqref{app_d2} with the desired properties.
\end{proof}

\medskip

Theorem~\ref{T1}  and Proposition~\ref{P222} for $G>0$ and $D>0$ are now an immediate consequence of Proposition~\ref{prop_step3}.

\begin{remark} {\rm A shorter proof for the existence of a traveling wave solution may be obtained when assumption~(\ref{assumrho}) is strengthened: If there is a $C^1$-smooth extension of $\varrho$ to $\mathbb{R}$ with  $\varrho'(1)<0,$  the principle of linearized stability implies that $(H_*,0)$ is a sink and that $(2H_*,1)$ is a saddle point of \eqref{app_d1}-\eqref{app_d2}. Moreover, the stable manifold of $(2H_*,1)$ points into the invariant rectangle $[H_*,2H_*]\times [0,1]$. Invoking the theorem of Grobman-Hartman, see \cite[Theorems~19.9 \&~19.11]{Am90} for instance, this implies the existence of a heteroclinic orbit connecting $(H_*,0)$ with 
$(2H_*,1)$. }
\end{remark}

\section{Stationary Solutions}\label{stat_sol}

We briefly discuss stationary solutions corresponding to the choice $c=0$ in \eqref{i5}-\eqref{i6} and leading to the system
\begin{align}
\frac{G H^3}{3} H' - \frac{H^2}{2} \sigma'(\Gamma) \Gamma' & = 0\ , &&\xi\in \RR\ , \label{i5c} \\
\frac{G H^2}{2} \Gamma H' - \left[ H \Gamma \sigma'(\Gamma) - D \right] \Gamma' & =  0 \ ,  &&\xi\in \RR\ , \label{i6c}\\
H\ge 0\ ,\qquad 0\le \Gamma &< 1\ , &&\xi\in \RR\ . \label{i7c}
\end{align}
Clearly, taking $H$ as any positive constant and $\Gamma$ as any constant in $[0,1)$ gives a stationary solution.

\medskip

We now distinguish several cases. 

\smallskip

\noindent \textit{Case~$G=D=0$}. The system \eqref{i5c}-\eqref{i6c} reduces to $H^2 \sigma'(\Gamma) \Gamma' = 0$. The only conclusions we can draw from this are that $\Gamma$ is constant on connected components of the positivity set of $H$ (without further restrictions on $H$) while there is no constraint on $\Gamma$ on the zero set of $H$. 

\smallskip

\noindent \textit{Case $G=0, D>0$}. The non-positivity of $\sigma'$ implies that $\Gamma'$ vanishes identically, that is, $\Gamma$ is constant while there is no restriction on $H$.

\smallskip

\noindent \textit{Case $G>0, D=0$}. There is a wide variety of discontinuous solutions. However, the only continuous solutions with $H\not\equiv 0$ are constant. Indeed, multiplying \eqref{i5c} by $3\Gamma$ and \eqref{i6c} by $-2H$ and adding the results yield $H^2 \Gamma \sigma'(\Gamma) \Gamma' = 0$. Thus, on connected components of the positivity set of $\Gamma$ we see from \eqref{i5c} that $H$ has to be constant and the same conclusion holds true on connected components of the zero set of $\Gamma$. Since $H\not\equiv 0$ is assumed to be continuous, it is necessarily a positive constant which, together with \eqref{i6c}, implies $\Gamma$ is a constant.

\smallskip

\noindent \textit{Case $G>0, D>0$}. The same computations as in the previous case yield $H [H \Gamma \sigma'(\Gamma) \Gamma' - 4 D] \Gamma'=0$ from which we deduce that $H \Gamma'=0$. In combination with \eqref{i5c} this implies that $H$ and $\Gamma$ are constants on connected components of the positivity set of $H$, while \eqref{i6c} ensures that $\Gamma$ is constant on the connected components of the zero set of $H$. Thus, in this case again, the only continuous solutions are the constants.

\section*{Acknowledgments}

The work of Ph.L. was partially supported by the Deutscher Akademischer Austausch Dienst (DAAD) while enjoying the hospitality of the Institut f\"ur Angewandte Mathematik, Leibniz Universit\"at Hannover.

\bibliographystyle{amsplain}

\providecommand{\bysame}{\leavevmode\hbox to3em{\hrulefill}\thinspace}
\providecommand{\MR}{\relax\ifhmode\unskip\space\fi MR }
\providecommand{\MRhref}[2]{%
  \href{http://www.ams.org/mathscinet-getitem?mr=#1}{#2}
}
\providecommand{\href}[2]{#2}
\begin{thebibliography}{}

\end{thebibliography}


\begin{thebibliography}{10}
%
\bibitem{Am90}
H.~Amann, \emph{Ordinary Differential Equations}, Walter de Gruyter \& Co., Berlin, 1990. 
%
\bibitem{BN04}
J.W.~Barrett and R.~N{\"u}rnberg, \emph{Convergence of a finite-element approximation of surfactant spreading on a thin film in the presence of van der {W}aals forces}, IMA J. Numer. Anal. \textbf{24} (2004), 323--363.
%
\bibitem{BGN03}
J.W.~Barrett, H.~Garcke, and R.~N{\"u}rnberg, \emph{Finite element approximation of surfactant spreading on a thin film}, SIAM J. Numer. Anal. \textbf{41} (2003), 1427--1464.
%
\bibitem{CF95} C.-H.~Chang and E.I.~Franses, \emph{Adsorption dynamics of surfactants at the air/water interface: a critical review of mathematical models, data, and mechanisms}, Colloids Surf.~A \textbf{100} (1995), 1--45.

%
\bibitem{CT13}
M.~Chugunova and R.M.~Taranets,\emph{Nonnegative weak solutions for a degenerate system modeling the spreading of surfactant on thin films}, Appl. Math. Res. Express. AMRX \textbf{2013}, 102--126. 
%
\bibitem{CMP09}
R.V.~Craster, O.K.~Matar, and D.T.~Papageorgiou, \emph{Breakup of surfactant-laden jets above the critical micelle
concentration}, J. Fluid Mech. \textbf{629} (2009), 195--219. 
%
\bibitem{EHLW11}
J.~Escher, M.~Hillairet, Ph.~Lauren{\c{c}}ot, and Ch.~Walker, \emph{Global weak solutions for a degenerate parabolic system modeling the spreading of insoluble surfactant}, Indiana Univ. Math. J. \textbf{60} (2011), 1975--2019.
%
\bibitem{EHLW12a}
\bysame, \emph{Thin film equations with soluble surfactant and gravity: modeling and stability of steady states}, Math. Nachr. \textbf{285} (2012), 210--222.
%
\bibitem{EHLW12b}
\bysame, \emph{Weak solutions to a thin film model with capillary effects and insoluble surfactant}, Nonlinearity \textbf{25} (2012), 2423--2441.
%
\bibitem{JenGro92}
O.E.~Jensen and J.B.~Grotberg, \emph{Insoluble surfactant spreading on a thin viscous film: shock evolution and film rupture}, J. Fluid Mech. \textbf{240} (1992), 259--288.
%
\bibitem{JenGro93}
\bysame, \emph{The spreading of heat or soluble surfactant along a thin liquid film}, Phys. Fluids A \textbf{5} (1993), 58--68.  
%
\bibitem{GW03}
H.~Garcke and S.~Wieland, \emph{Surfactant spreading on thin viscous films: nonnegative solutions of a coupled degenerate system}, SIAM J. Math. Anal. \textbf{37} (2006), 2025--2048.
%
\bibitem{LS06}
R.~Levy and M.~Shearer, \emph{The motion of a thin liquid film driven by surfactant and gravity}, SIAM J. Appl. Math. \textbf{66} (2006), 1588--1609.
%
\bibitem{LSW07}
R.~Levy, M.~Shearer, and T.P.~Witelski, \emph{Gravity-driven thin liquid films with insoluble surfactant: smooth traveling waves}, European J. Appl. Math. \textbf{18} (2007), 679--708.
%
\bibitem{MS09}
V.~Manukian and S.~Schecter, \emph{Travelling waves for a thin liquid film with surfactant on an inclined plane}, Nonlinearity \textbf{22} (2009), 85--122.
%
\bibitem{Renardy96a}
M.~Renardy, \emph{A singularly perturbed problem related to surfactant spreading on thin films}, Nonlinear Anal. \textbf{27} (1996), 287--296. 
%
\bibitem{Renardy96b}
\bysame, \emph{On an equation describing the spreading of surfactants on thin films}, Nonlinear Anal. \textbf{26} (1996), 1207--1219. 
%
\bibitem{Renardy97}
\bysame, \emph{A degenerate parabolic-hyperbolic system modeling the spreading of surfactants}, SIAM J. Math. Anal. \textbf{28} (1997), 1048--1063. 
%
\bibitem{WSL06}
T.P.~Witelski, M.~Shearer, and R.~Levy, \emph{Growing surfactant waves in thin liquid films driven by gravity}, AMRX Appl. Math. Res. Express \textbf{2006}, Art. ID 15487, 21~pp.
%
\end{thebibliography}
%

%
%
\end{document}